\newtheorem{theorem}{Theorem}
\newtheorem{definition}[theorem]{Definition}
\newtheorem{example}[theorem]{Example}
\newtheorem{question}[theorem]{Question}
\newtheorem{lemma}[theorem]{Lemma}
\newtheorem{proposition}[theorem]{Proposition}
\newtheorem{notation}[theorem]{Notation}
\newtheorem{remark}[theorem]{Remark}
\newtheorem{claim}[theorem]{Claim}
\newcommand{\er}{\mathbb R}
\newcommand{\en}{\mathbb N}
\newcommand{\N}{\mathbb N}
\newcommand{\zet}{\mathbb Z}
\newcommand{\diam}{\operatorname{diam}}
\def\rest{\hskip-2.5pt\restriction}
\begin{document}

\title[Haar meager sets revisited]{Haar meager sets revisited}

\author{Martin Dole\v{z}al}
\author{Martin Rmoutil}
\author{Benjamin Vejnar}
\author{V\'aclav Vlas\'ak}
\thanks{
	The first author was supported by RVO: 67985840.
	The second author has received funding from the European Research  Council under the European Union's Seventh Framework Programme (FP/2007-2013) / ERC Grant Agreement n.291497.
		The third author was supported by the grant GA\v CR 14-06989P and he is a junior researcher in the University Center for Mathematical Modeling, Applied Analysis and Computational Mathematics (Math MAC).
		The fourth author was supported by the grant GA\v CR P201/15-08218S and he is a junior researcher in the University Center for Mathematical Modeling, Applied Analysis and Computational Mathematics (Math MAC)}

\begin{abstract}
In the present article we investigate Darji's notion of Haar meager sets from several directions. We consider alternative definitions and show that some of them are equivalent to the original one, while others fail to produce interesting notions. We define Haar meager sets in nonabelian Polish groups and show that many results, including the facts that Haar meager sets are meager and form a $\sigma$-ideal, are valid in the more general setting as well. The article provides various examples distinguishing Haar meager sets from Haar null sets, including decomposition theorems for some subclasses of Polish groups. As a corollary we obtain, for example, that $\zet^\omega$, $\er^\omega$ or any Banach space can be decomposed into a Haar meager set and a Haar null set. We also establish the stability of non-Haar meagerness under Cartesian product.
\end{abstract}

\maketitle

\section{Introduction}

The notion of Haar meager sets in abelian Polish groups was defined by Darji in \cite{Darji} as a topological counterpart to Haar null sets defined by Christensen in \cite{Christensen}. Let $G$ be an abelian Polish group. A set $A\subset G$ is said to be \emph{Haar null} if there exists a Borel set $B\supset A$ and a Borel probability measure $\mu$ on $G$ such that $\mu(x+B)=0$ for each $x \in G$. By Darji's definition, $A$ is said to be \emph{Haar meager} if there exists a Borel set $B\supset A$, a compact metric space $K$ and a continuous (``witness'') function $f:K\to G$ such that $f^{-1}(x+B)$ is meager in $K$ for each $x\in G$.

Among other results, Christensen proved in his paper that Haar null sets form a $\sigma$-ideal (i.e. are closed under subsets and countable unions) and that they coincide with sets of Haar measure zero in locally compact abelian Polish groups. Similarly, Darji proved in his paper that Haar meager sets form a $\sigma$-ideal contained in the $\sigma$-ideal of meager sets and that they coincide with meager sets in locally compact abelian Polish groups. 

One might suspect that the similarity between Haar null and Haar meager sets is only formal and does not go much beyond the very basic properties. It turns out, however, that the two notions share other properties as well, which resembles the duality between category and measure. Indeed, many of the results about Haar meager sets (as well as their proofs) were inspired by analogous results about Haar null sets. In particular, in non-locally compact abelian Polish groups, it is known that compact sets are both Haar null (essentially due to Christensen, \cite[Theorem~2]{Christensen}; see also \cite{Dougherty}) and Haar meager (see \cite{Jablonska}), and, as expected, the reason is the same in both cases: If $A$ is a Borel subset of an abelian Polish group which is non-Haar null or non-Haar meager, then $A-A$ contains an open neighborhood of $0$. (To complete the argument, it suffices to observe that if $A$ is compact, so is $A-A$.)

In this paper we continue the study of Haar meager sets from several viewpoints. In Section~2 we extend the definition of Haar meager sets to all Polish groups (not necessarily abelian) and prove that in any Polish group they are meager and form a $\sigma$-ideal. In the following sections, some of our results require the commutativity of the group operation, others do not. 

In Section~3 we examine the definition of Haar meagerness itself; more precisely, we look at some other natural candidates for the definition of a topological analogue of Haar null sets. We discuss two natural ways how to modify the definition: We can alter the requirements on the `hull' $B$ or we can consider removing the witness function from the definition. An example of alternative definition is the notion of \emph{naively Haar meager sets} where we remove the hull $B$ from the definition of Haar meagerness. (This is a notion analogous to that of \emph{naively Haar null} sets introduced in \cite{ElekesVidnyanszky2015}.) We prove that in uncountable abelian Polish groups there are always naively Haar meager sets which are not Haar meager; moreover, under the Continuum Hypothesis we show that the group can be decomposed into two naively Haar meager sets.

Section~4 establishes the stability of non-Haar meagerness under Cartesian product. We also provide an example which shows that a Fubini type theorem for Haar meager sets fails.

Section~5 contains decomposition results for some special classes of Polish groups, in particular all Banach spaces or the group $\zet^\omega$: Such groups can be decomposed into a Haar meager set and a Haar null set. It is well known (and easy to see) that Euclidean spaces can be decomposed into a meager set and a set of measure zero; our results are the `non-locally compact versions' of this statement. 

Finally, in Section~6 we provide other examples showing the differences between the studied $\sigma$-ideals, this time with the help of the group of permutations of integers.



\section{Haar meager sets in non-abelian groups}

We are going to prove that the notion of a Haar meager set behaves well even in the nonabelian setting, namely that the collection of such sets forms a $\sigma$-ideal contained in the $\sigma$-ideal of meager sets. The proofs are of a similar nature as those given in the abelian case in \cite{Darji}. Since there does not necessarily exist a complete left invariant metric on a Polish group (e.g. $S_\infty$, see \cite[p.~8]{BeckerKechris}) we need to be more careful. In the proof of Theorem~\ref{sigmaideal} we are following the ideas from \cite{CohenKallman} where the authors corrected the wrong proof from \cite{Mycielski} that Haar null sets in a Polish group form a $\sigma$-ideal. 

\begin{definition}
\label{nonabelian definition}
Let $G$ be a Polish group. A set $A\subseteq G$ is said to be \emph{Haar meager} if there are a Borel set $B\subseteq G$ such that $A\subseteq B$, a compact metric space $K$, and a continuous mapping $f\colon K\to G$ such that $f^{-1}(gBh)$ is meager in $K$ for every $g,h\in G$.

We call $f$ a witness function for $A$
and $B$ is called a (Borel) hull of $A$.
We denote the collection of all Haar meager sets by $\mathcal{HM}$.
\end{definition}

\begin{lemma}\label{okolicko}
Let $G$ be a Polish group with a compatible metric $\rho$ and $L$ be a compact subset of $G$. Then for every $\varepsilon>0$ there is a neighborhood $U$ of $1$ such that $\rho(x\cdot u, x)<\varepsilon$ for every $x\in L$ and $u\in U$.
\end{lemma}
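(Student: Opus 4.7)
The plan is to combine the continuity of group multiplication at points of the form $(x,1)$ with the compactness of $L$ in a standard finite-subcover argument; no invariance of $\rho$ can be used, since the metric is just a compatible one.

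First I would fix $\varepsilon>0$ and appeal to the joint continuity of the map $(y,u)\mapsto y\cdot u$ from $G\times G$ to $G$. For each $x\in L$, continuity at $(x,1)$ yields an open neighborhood $V_x$ of $x$ and an open neighborhood $U_x$ of $1$ such that $\rho(y\cdot u,\, x)<\varepsilon/2$ whenever $y\in V_x$ and $u\in U_x$. By shrinking $V_x$ if necessary, I may also assume that $\rho(y,x)<\varepsilon/2$ for every $y\in V_x$. Then for any $y\in V_x$ and $u\in U_x$ the triangle inequality gives
\[
\rho(y\cdot u,\, y)\le \rho(y\cdot u,\, x)+\rho(x,\, y)<\varepsilon.
\]

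Next I would use compactness of $L$: the open cover $\{V_x\}_{x\in L}$ admits a finite subcover $V_{x_1},\dots,V_{x_n}$. Set $U:=\bigcap_{i=1}^n U_{x_i}$, which is an open neighborhood of $1$. For an arbitrary $x\in L$ pick an index $i$ with $x\in V_{x_i}$; then for every $u\in U\subseteq U_{x_i}$ the inequality from the previous step applies with $y=x$ and yields $\rho(x\cdot u,x)<\varepsilon$, as desired.

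I don't anticipate any real obstacle: the only subtlety is that $\rho$ need not be left- or right-invariant (in fact no complete left-invariant metric need exist on a Polish group, as noted for $S_\infty$), so one cannot reduce to uniform continuity at the single point $1$. The finite subcover step is precisely what compensates for the lack of such invariance.
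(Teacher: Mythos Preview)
Your proof is correct and follows essentially the same approach as the paper: continuity at points $(x,1)$ combined with a finite subcover of $L$ to produce the intersection $U=\bigcap_{i=1}^n U_{x_i}$. The only cosmetic difference is that the paper applies continuity directly to the real-valued map $(x,u)\mapsto\rho(x\cdot u,x)$ (which vanishes at $(x,1)$), thereby obtaining $\rho(y\cdot u,y)<\varepsilon$ for $(y,u)\in V_x\times U_x$ in one step, whereas you reach the same inequality via continuity of multiplication and a triangle-inequality splitting with $\varepsilon/2$.
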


\begin{proof}
	By the continuity of the function $(x,u)\mapsto\rho(x\cdot u,x)$, for every $x\in L$ there are neighborhoods $V_x$ of $x$ and $U_x$ of $1$ such that the image of $V_x\times U_x$ is a subset of $[0,\varepsilon)$.
	Let $F\subseteq L$ be a finite set such that $L\subseteq\bigcup_{x\in F}V_x$.
	We define $U=\bigcap_{x\in F}U_x$.
	It is easy to check that this choice of $U$ works.	
\end{proof}

\begin{theorem}\label{sigmaideal}
Haar meager sets in a Polish group form a $\sigma$-ideal.
\end{theorem}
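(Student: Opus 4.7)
The closure under subsets is immediate: if $A \subseteq A'$ and $A'$ is Haar meager with hull $B'$ and witness $f'$, then $A \subseteq B'$, so the same pair witnesses Haar meagerness of $A$. The real content is the countable union, which I would handle as follows. Suppose each $A_n$ is Haar meager with Borel hull $B_n \supseteq A_n$ and continuous witness $f_n : K_n \to G$; put $B := \bigcup_n B_n$, which is Borel and contains $\bigcup_n A_n$. The plan, following the Cohen--Kallman strategy for Haar null sets, is to build one witness $f : K \to G$ on a product $K := \prod_n K_n'$, where $K_n' \subseteq K_n$ are suitably small compact neighborhoods of chosen basepoints $k_n^* \in K_n$, and to define $f$ as an infinite product of shifted witnesses $\tilde f_n := f_n(k_n^*)^{-1} \cdot f_n|_{K_n'}$.

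First I would observe that $\tilde f_n$ remains a witness for $A_n$ on $K_n'$, provided $K_n'$ has nonempty interior in $K_n$: any meager subset of $K_n$ meets $K_n'$ in a set meager in $K_n'$, and $\tilde f_n^{-1}(gB_nh) = f_n^{-1}(f_n(k_n^*)\, g B_n h) \cap K_n'$ for every $g,h \in G$. To make the infinite product converge uniformly to a continuous function, I would choose the neighborhoods $U_n$ of $1_G$ with $\tilde f_n(K_n') \subseteq U_n$ inductively using Lemma~\ref{okolicko}: having chosen $U_1,\ldots,U_{n-1}$, let $L_{n-1} := \overline{U_1 U_2 \cdots U_{n-1}}$ (a compact subset of $G$), apply the lemma with $\varepsilon = 2^{-n}$ to obtain a neighborhood $V_n$ of $1$, and then pick $K_n'$ small enough that $\tilde f_n(K_n') \subseteq U_n \subseteq V_n$. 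Fixing a compatible complete metric $\rho$ on $G$, the partial products $p_N(k) := \tilde f_1(k_1)\cdots \tilde f_N(k_N)$ then satisfy $\rho(p_N, p_{N-1}) < 2^{-N}$ uniformly in $k \in K$, so they converge uniformly to a continuous $f : K \to G$.

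It remains to verify that $f^{-1}(gBh) = \bigcup_n f^{-1}(gB_n h)$ is meager in $K$ for every $g,h \in G$. Splitting $K = K_n' \times K_n''$ with $K_n'' := \prod_{i\neq n} K_i'$, the same uniform Cauchy argument shows that the left partial product $P_n(k'') := \tilde f_1(k_1)\cdots \tilde f_{n-1}(k_{n-1})$ and the right tail $T_n(k'') := \lim_N \tilde f_{n+1}(k_{n+1})\cdots \tilde f_N(k_N)$ are continuous in $k'' \in K_n''$, and
\[
f(k_n, k'') = P_n(k'') \cdot \tilde f_n(k_n) \cdot T_n(k'').
\]
Hence each $k_n$-section of $f^{-1}(gB_nh)$ equals $\tilde f_n^{-1}\bigl(P_n(k'')^{-1} g B_n h\, T_n(k'')^{-1}\bigr)$, which is meager in $K_n'$ by the witness property of $\tilde f_n$. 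Since $f^{-1}(gB_nh)$ is Borel, it has the Baire property, so Kuratowski--Ulam on $K_n' \times K_n''$ yields that it is meager in $K$; the countable union over $n$ is meager as well.

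The main obstacle is organising this product construction in the non-abelian setting where $\rho$ need not be left- or right-invariant: a naive choice of $U_n$ based only on the ``size'' of $\tilde f_n(K_n')$ in $\rho$ would not suffice, because the increment at the $n$th step depends on conjugation-like interaction with the cumulative left factor $p_{n-1}(k)$. Lemma~\ref{okolicko} is precisely the tool that circumvents this by letting us bound $\rho(x u, x)$ uniformly over a prescribed compact set $x \in L_{n-1}$, which is exactly what makes the induction close.
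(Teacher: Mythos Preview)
Your approach is essentially the paper's: restrict each witness to a small compact neighborhood of a basepoint so that the normalized witnesses take values near $1$, form the infinite product on $K=\prod_n K_n'$, use Lemma~\ref{okolicko} to secure uniform convergence of the partial products in a complete compatible metric, and conclude via Kuratowski--Ulam on each factor. The paper does exactly this, with the cosmetic difference that it assumes $1\in f_n(K_n)$ instead of left-translating by $f_n(k_n^*)^{-1}$, and it sets $K_n'=\overline{f_n^{-1}(U_n)}$ rather than choosing an abstract compact neighborhood of the basepoint.

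There is, however, one step in your write-up that fails as stated. You set $L_{n-1}:=\overline{U_1U_2\cdots U_{n-1}}$ and call it ``a compact subset of $G$'', but the $U_i$ are open neighborhoods of $1$, and in a non-locally compact Polish group no neighborhood of the identity has compact closure; so $L_{n-1}$ is not compact and Lemma~\ref{okolicko} does not apply. The fix is immediate and is what the paper does: use the compact images instead, i.e.\ take $L_{n-1}:=\tilde f_1(K_1')\cdots\tilde f_{n-1}(K_{n-1}')$, which is compact as a continuous image of $K_1'\times\cdots\times K_{n-1}'$. (The paper uses the slightly larger $L_n=f_1(K_1)\cdots f_n(K_n)$, which avoids any circularity in the induction.) A related minor imprecision: merely requiring $K_n'$ to have nonempty interior in $K_n$ does not guarantee that meager subsets of $K_n$ trace to meager subsets of $K_n'$ (an isolated point of $K_n'$ could spoil this); you want $K_n'$ to be the closure of an open set, which is what one would naturally choose anyway and what the paper does explicitly.
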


\begin{proof}
Let $G$ be a Polish group. Let us fix a compatible complete metric $\rho$. 
Clearly a subset of a Haar meager set is Haar meager. Thus it is enough to prove that the system of Haar meager sets is closed under countable unions. Suppose that $A_n$, $n\in\omega$, are Haar meager sets in $G$. For $n\in\omega$, there exists a Borel set $B_n\supseteq A_n$ and a continuous mapping $f_n\colon K_n\to G$ defined on a compact space $K_n$ such that $f_n^{-1}(gB_nh)$ is meager in $K_n$ for every $g,h\in G$. Without loss of generality we may suppose that $1\in f_n(K_n)$ for every $n\in\omega$.

Let $L_n=f_1(K_1)\cdot\ldots\cdot f_n(K_n)$. The set $L_n$ is compact in $G$ because it is a continuous image of $K_1\times\ldots\times K_n$ under the mapping $f_1\cdot \ldots \cdot f_n$.
For any fixed $n\in\omega$ we get by Lemma~\ref{okolicko} that there is a neighborhood $U_n$ of $1$ such that $\rho(x\cdot u, x)<2^{-n}$ for every $x\in L_n$ and $u\in U_n$.
Let $K_n'$ be the closure of $f_n^{-1}(U_n)$.

\begin{claim}
	For every $n\in\omega$, the mapping $f_n\rest_{K_n'}$ witnesses that $B_n$ is Haar meager.
\end{claim}

\begin{proof}
	Let us fix $n\in\omega$ and $g,h\in G$. The set $f_n^{-1}(U_n)$ is open in $K_n$ and $f_n^{-1}(gB_nh)$ is meager in $K_n$, and so $f_n^{-1}(gB_nh)\cap f_n^{-1}(U_n)$ is meager in $f_n^{-1}(U_n)$.
	Since each open subset of $G$ is comeager in its closure, the set $f_n^{-1}(gB_nh)\cap K_n'$ is meager in $K_n'$.
\end{proof}

Define $K=\prod_{n\in\omega} K_n'$ and $\varphi_n\colon K\to G$ by
\begin{equation*}
\varphi_n(x)=f_1(x_1)\cdot f_2(x_2)\cdot\ldots\cdot f_n(x_n).
\end{equation*}
All the mappings $\varphi_n$ are clearly continuous and $K$ is compact.
Moreover, by the choice of $U_n$ we obtain $\rho(\varphi_n(x), \varphi_{n+1}(x))\leq 2^{-n}$
and hence the sequence of mappings $\varphi_n$ converges uniformly.
Let $f=\lim \varphi_n$; then $f\colon K\to G$ is continuous, being the uniform limit of continuous mappings.

We claim that $f$ witnesses that $A=\bigcup_{n\in\omega} A_n$ is Haar meager. To that end, note that $B=\bigcup_{n\in\omega} B_n$ is Borel and it contains $A$, and take arbitrary $g,h\in G$ and $i\in\omega$; we aim to show that $f^{-1}(gB_ih)$ is meager. Fix $x_j\in K_j'$ for every $j\neq i$. Now,
\begin{IEEEeqnarray*}{rCl}
\IEEEeqnarraymulticol{3}{l}{\{x_i\in K_i'\colon f(x_1, \dots, x_i, \dots)\in gB_ih\}} \\\quad
& = & \{x_i\in K_i'\colon f_1(x_1)\cdot\ldots\cdot f_i(x_i)\cdot\ldots \in gB_ih\} \\\quad
& = & f_i^{-1}((f_1(x_1)\cdot\ldots\cdot f_{i-1}(x_{i-1}))^{-1}gB_ih(f_{i+1}(x_{i+1})\cdot\dots)^{-1})\cap K_i'
\end{IEEEeqnarray*}
is meager in $K_i'$ because it is the inverse image under $f_i\rest_{K_i'}$ of a translate of the set $B_i$. Hence, by the Kuratowski-Ulam theorem (see e.g. \cite[Theorem 8.41]{Kechris}) applied in the product space $( \prod_{j\neq i} K'_j) \times K'_i$, the Borel set $f^{-1}(gB_ih)$ is meager. Thus $f^{-1}(gBh)$ is meager for every $g, h\in G$, which implies that $A$ is Haar meager.
\end{proof}

In the proof of the next theorem we follow \cite{Darji} where the abelian case was proved.

\begin{theorem}
Every Haar meager set in a Polish group is meager.
\end{theorem}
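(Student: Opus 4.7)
My plan is to reduce to showing that the Borel hull $B$ itself is meager (since $A \subseteq B$), and to accomplish this with two applications of the Kuratowski--Ulam theorem in the Polish product space $K \times G$.

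First, I would fix the data: a Borel hull $B \supseteq A$ and a continuous witness $f\colon K\to G$ with $K$ compact metric, so that $f^{-1}(gBh)$ is meager in $K$ for every $g,h\in G$. I will only need the case $h=1$. Consider the set
\begin{equation*}
E = \{(k,g)\in K\times G : g^{-1}f(k)\in B\}.
\end{equation*}
The map $(k,g)\mapsto g^{-1}f(k)$ is continuous (combining continuity of $f$, of inversion, and of multiplication in $G$), so $E$ is Borel in $K\times G$, hence has the Baire property.

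Next, I would compute the sections. For each fixed $g\in G$, the vertical section is $E_g = \{k : f(k)\in gB\} = f^{-1}(gB)$, which is meager in $K$ by the Haar meagerness assumption. Since $K\times G$ is a product of second countable Baire spaces, the Kuratowski--Ulam theorem (Kechris \cite[Theorem 8.41]{Kechris}) applied in the direction $G \to K$ tells me that $E$ is meager in $K\times G$. Applying Kuratowski--Ulam in the other direction, there is a comeager (in particular non-empty) set of $k\in K$ for which the horizontal section $E^k = \{g : g^{-1}f(k)\in B\} = f(k)B^{-1}$ is meager in $G$.

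Finally, I would pick any such $k$. Because left translation by $f(k)^{-1}$ and group inversion are both homeomorphisms of $G$, meagerness of $f(k)B^{-1}$ transfers first to $B^{-1}$ and then to $B$. Hence $A\subseteq B$ is meager, which is what we wanted. I do not anticipate a serious obstacle here; the only point that needs minor care compared to Darji's abelian argument is making sure that the reduction $E^k = f(k)B^{-1}$ and the subsequent transfer of meagerness use only homeomorphisms of $G$, which is automatic for any topological group, so the proof goes through in the nonabelian setting without modification.
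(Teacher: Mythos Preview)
Your argument is correct and essentially identical to the paper's proof: the set $E$ is (up to swapping the factors) the paper's set $S=\{(g,x):f(x)\in gB\}$, and both proofs proceed via the same two applications of the Kuratowski--Ulam theorem followed by the observation that $f(k)B^{-1}$ is carried to $B$ by a self-homeomorphism of $G$. The only cosmetic difference is that the paper phrases the last step as ``mapped onto $B$ by an automorphism of $G$'' whereas you spell it out as left translation composed with inversion.
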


\begin{proof}
Let $A$ be a Haar meager subset of a Polish group $G$, and let us have a Borel set $B\supseteq A$ together with a continuous mapping $f\colon K\to G$ witnessing the Haar meagerness of $A$.
Define $S=\{(g,x)\in G\times K\colon f(x)\in gB\}$; then $S$ is Borel.
For  any fixed $g\in G$ the set $\{x\in K\colon f(x)\in gB\}$ is meager in $K$ and it is a section of $S$.
Hence, by the Kuratowski-Ulam theorem, $S$ is meager in $G\times K$.
Using the Kuratowski-Ulam theorem again we conclude that there are comeager many $x\in K$ for which $\{g\in G\colon f(x)\in gB\}$ is meager. Since $K$ is compact, there is at least one such $x$. The last set is equal to $f(x)B^{-1}$, and can be mapped onto $B$ by an automorphism of the group $G$ and hence $B$ is meager as well. Thus also $A$ is meager.
\end{proof}

The following question has a positive answer in the abelian case (\cite[Corollary~1]{Jablonska}).

\begin{question}
Are compact sets in a non-locally compact Polish group Haar meager?
What about the special case $G=S_\infty$?
\end{question}


\section{Some notes on the definition of Haar meager sets}

It was shown in \cite{Darji} that in any abelian Polish group, the collection $\mathcal{HM}$ forms a $\sigma$-ideal (see \cite[Theorem 2.9]{Darji}) which is contained in the $\sigma$-ideal of all meager sets (see \cite[Theorem 2.2]{Darji}) and these two $\sigma$-ideals coincide if and only if $G$ is locally compact (see \cite[Theorem 2.4, Corollary 2.5 and Example 2.6]{Darji}).
Therefore we are mostly interested in the case where the group $G$ is not locally compact. 

It was explained in \cite[Example 2.3]{Darji} that some sort of definability condition (the Borelness of the set $B$ in the definition) is necessary to ensure that every Haar meager set is meager. However, it may be appealing to replace Borelness by the Baire property which seems to be better related to the Baire category. So let us denote by $\mathcal{\widetilde{HM}}$ the class of all sets satisfying Definition \ref{nonabelian definition} where we only require $B$ to have the Baire property (instead of being Borel). Even then, all relevant results from \cite{Darji} still remain true for the class $\mathcal{\widetilde{HM}}$ (with the same proofs). In particular, $\mathcal{\widetilde{HM}}$ is a $\sigma$-ideal which is contained in the $\sigma$-ideal of all meager sets, and these two $\sigma$-ideals coincide if and only if $G$ is locally compact. Nevertheless the following example shows that $\mathcal{\widetilde{HM}}$ is consistently a bigger $\sigma$-ideal than $\mathcal{HM}$.

\begin{example}
Assume the Continuum Hypothesis. Let $G$ be any abelian Polish group which is not locally compact. Then there is $A\subseteq G$ such that $A\in\mathcal{\widetilde{HM}}\setminus\mathcal{HM}$.
\end{example}

\begin{proof}
Let $K$ be an arbitrary perfect compact subset of $G$ and let $\{K_{\alpha}\colon\alpha<\omega_1\}$ be an ordering of all translates $gK$, $g\in G$, of the set $K$. 
Let $\{B_{\alpha}\colon\alpha<\omega_1\}$ be an ordering of all Borel subsets of $G$ which are in $\mathcal{HM}$.
By \cite[Corollary 1]{Jablonska}, each compact subset of $G$ is in $\mathcal{HM}$, and so we have
$B_{\alpha}\cup\bigcup\{K_{\beta}\colon \beta<\alpha\}\in\mathcal{HM}$ for every $\alpha<\omega_1$.
By \cite[Example 2.6]{Darji}, there is a closed meager subset $F$ of $G$ such that $F\notin\mathcal{HM}$.
Then for each $\alpha<\omega_1$, the set 
$F\setminus(B_{\alpha}\cup \bigcup\{K_{\beta}\colon \beta<\alpha\})$ 
is not in $\mathcal{HM}$, in particular it is nonempty.
So for every $\alpha<\omega_1$, we can pick
$p_{\alpha}\in F\setminus(B_{\alpha}\cup \bigcup\{K_{\beta}\colon \beta<\alpha\})$.
Finally, we put
$A=\{p_{\alpha}\colon\alpha<\omega_1\}$.

The set $A$ is meager since $A\subseteq F$, in particular $A$ has the Baire property. Moreover, $A$ intersects each translate of $K$ in at most countably many points, in particular in a meager subset of the translate. It easily follows that the identity mapping $id\colon K\rightarrow G$ witnesses that $A\in\mathcal{\widetilde{HM}}$. On the other hand, we have $A\notin\mathcal{HM}$ since $A$ is not contained in any $B_{\alpha}$, $\alpha<\omega_1$.
\end{proof}

In spite of the previous example, it is possible to replace Borelness by analyticity in Definition \ref{nonabelian definition}. This follows from the following proposition which is a straightforward modification of the proof of an analogous proposition from \cite{Solecki} (which deals with Haar null sets instead of Haar meager sets).

\begin{proposition}\label{P:HMforAnalytic}
Let $G$ be a Polish group and $A$ be an analytic subset of $G$. Let $K$ be a compact metric space and $f\colon K\rightarrow G$ be a continuous mapping such that $f^{-1}(gAh)$ is meager in $K$ for every $g,h\in G$. Then $A$ is Haar meager.
\end{proposition}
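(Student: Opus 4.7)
The plan is to adapt to the Baire-category setting the argument of Solecki in \cite{Solecki} for the analogous statement about Haar null sets.

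First, using the analyticity of $A$, I would fix a Suslin representation: a system $(F_s)_{s \in \omega^{<\omega}}$ of closed subsets of $G$ such that $F_{s\frown k}\subseteq F_s$, $\diam(F_s)\to 0$ as $|s|\to\infty$, and
\[
A=\bigcup_{\sigma\in\omega^\omega}\bigcap_{n\in\omega}F_{\sigma\rest n}.
\]
Equivalently, $A=\pi_G(C)$ for some closed set $C\subseteq G\times\omega^\omega$, where $\pi_G$ denotes the projection.

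Next, following Solecki's pattern, I would simultaneously build a Borel hull $B\supseteq A$ and a compatible continuous witness $\tilde f\colon \tilde K\to G$. The natural candidate for the hull is the $G_\delta$ set $B=\bigcap_{n\in\omega}\bigcup_{s\in\omega^n}F_s$. Since $\omega^\omega$ is not compact, the new witness domain $\tilde K$ must be constructed from a compact piece of $\omega^\omega$; the standard choice is $\tilde K=K\times\prod_{i\in\omega}\{0,\dots,N_i\}$ for a carefully selected $\vec N\in\omega^\omega$. Using the shrinking diameters, a continuous branch-selection $\psi(\sigma)\in\bigcap_n F_{\sigma\rest n}\subseteq A$ is well defined on the compact factor, and one can define $\tilde f(x,\sigma)=f(x)\cdot\psi(\sigma)$ (or a similar combination).

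Third, I would verify via the Kuratowski-Ulam theorem that $\tilde f^{-1}(gBh)$ is meager in $\tilde K$ for every $g,h\in G$. For fixed $(g,h)$, slicing in the $\sigma$-factor reduces the question to showing that for comeager-many $\sigma$, the corresponding $K$-slice is, up to a meager error, of the form $f^{-1}(g'Ah')$ for suitable translates $g',h'$, and is therefore meager by the original hypothesis on $f$; reassembling via Kuratowski-Ulam then yields meagerness of $\tilde f^{-1}(gBh)$ in $\tilde K$.

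The main obstacle, which is also the heart of Solecki's original proof, is the \emph{ghost-point problem}: the $G_\delta$ hull $B$ typically contains points arising from ill-founded paths through the Suslin scheme that do not correspond to any element of $A$. Controlling these extra points---so that they contribute only a meager set to $\tilde f^{-1}(gBh)$---requires a delicate choice of the compactification parameter $\vec N$ in concert with the shrinking diameters $\diam(F_s)\to 0$, and a careful coupling between the scheme and the witness. Translating Solecki's measure-theoretic estimate into this Kuratowski-Ulam style argument is the ``straightforward modification'' referenced in the excerpt.
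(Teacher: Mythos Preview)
Your plan diverges sharply from both the paper's proof and Solecki's original argument, and it contains a genuine gap.

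The paper (following Solecki) does \emph{not} build a new witness or a Suslin-scheme hull. It keeps the given $f\colon K\to G$ and applies the First Reflection Theorem: one shows that the family
\[
\Phi=\{X\in\Sigma^1_1(G): f^{-1}(gXh)\text{ is meager in }K\text{ for all }g,h\in G\}
\]
is $\Pi^1_1$ on $\Sigma^1_1$ (this is a direct consequence of Novikov's theorem, since ``meager in $K$'' is a $\Pi^1_1$ condition on analytic subsets of $K$). Since $A\in\Phi$, reflection produces a Borel $B\supseteq A$ with $B\in\Phi$, and the \emph{same} $f$ witnesses Haar meagerness of $B$. No new compact space, no Suslin scheme, no ghost points.

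In your scheme, the step that fails is the third one. For fixed $\sigma$, the $K$-slice of $\tilde f^{-1}(gBh)$ with $\tilde f(x,\sigma)=f(x)\psi(\sigma)$ is
\[
\{x\in K: f(x)\psi(\sigma)\in gBh\}=f^{-1}\bigl(gBh\,\psi(\sigma)^{-1}\bigr),
\]
which is the $f$-preimage of a two-sided translate of $B$, not of $A$. Your hypothesis says nothing about translates of $B$; multiplying by $\psi(\sigma)$ merely shifts $B$ and cannot turn it into $A$. So the claim that the slice ``is, up to a meager error, of the form $f^{-1}(g'Ah')$'' is unsupported: the error is exactly $f^{-1}$ of a translate of $B\setminus A$, and you have no control over that set. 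You correctly name this as the ghost-point problem, but you do not solve it---the choice of $\vec N$ governs the domain $\tilde K$, not the hull $B$, so it cannot shrink $B\setminus A$. In short, the ``delicate choice'' you defer to is the entire content of the proof, and nothing in the outline indicates how it would be carried out; the reflection argument sidesteps the issue completely.
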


\begin{proof}
During this proof, whenever $Z \subseteq X\times Y$ (for any sets $X,Y$) and $x \in X$, we denote $Z_x=\{y\in Y\colon (x,y)\in Z\}$. Let
\begin{equation*}
\Phi = \{ X \subseteq G \colon X\in\Sigma_1^1 \text{ and } f^{-1}(gXh) \text{ is meager in } K \text{ for every } g,h\in G \}.
\end{equation*}
We verify that the family $\Phi$ is $\Pi_1^1$ on $\Sigma_1^1$, i.e. that for any Polish space $Y$ and any $\Sigma_1^1$ set $P\subseteq Y\times G$, the set $\{y \in Y\colon P_y\in\Phi\}$ is $\Pi_1^1$. To that end, let $Y$ be a Polish space and $P\subseteq Y\times G$ be $\Sigma_1^1$. Define $\tilde P\subseteq G\times G\times Y\times K$ by
\begin{equation*}
(g,h,y,x)\in\tilde P \Leftrightarrow x \in f^{-1}(gP_yh) \left(\Leftrightarrow (g^{-1}f(x)h^{-1},y) \in P\right).
\end{equation*}
Then $\tilde P$ is obviously $\Sigma_1^1$, and so by Novikov's theorem (see e.g. \cite[Theorem 29.22]{Kechris}), the set $\{(g,h,y)\in G\times G\times Y\colon \tilde P_{(g,h,y)} \text{ is meager in } K\}$ is $\Pi_1^1$. Therefore the set
\begin{equation*}
\{y\in Y\colon \tilde P_{(g,h,y)} \text{ is meager in } K \text{ for every } g,h\in G\}=\{y\in Y\colon P_y\in\Phi\}
\end{equation*}
is also $\Pi_1^1$.

Since $A\in\Phi$, it follows by (the dual form of) the First Reflection Theorem (see e.g. \cite[Theorem 35.10 and the remarks following it]{Kechris}) that there exists a Borel set $B\in\Phi$ such that $A\subseteq B$.
\end{proof}

We do not know whether the Borel hull from Definition \ref{nonabelian definition} of Haar meager sets can be replaced by an $F_{\sigma}$ hull (even in the abelian setting).

\begin{question}
	Is every Haar meager set contained in an $F_\sigma$ Haar meager set?
\end{question}

In \cite{ElekesVidnyanszky} it is shown that in every non-locally compact abelian Polish group, there is a Borel Haar null set which does not have a $G_{\delta}$ hull. This is a negative answer to what looks like an analogy of the previous question in case of Haar null sets.

The next natural question posed in \cite{Darji} asks whether it is true that for every Haar meager set $A$ in an abelian Polish group, there is a witness function $f$ for $A$ which is the identity function defined on some compact subset of the corresponding group.

\begin{question}[{\cite[Problem 2]{Darji}}]
Let $G$ be an abelian Polish group and $A\subseteq G$ be a Haar meager set. Is there a compact set $L\subseteq G$ such that $gA\cap L$ is meager in $L$ for every $g\in G$?
\end{question}

Although we do not know the answer, we do know that it is not possible to pick an arbitrary witness function $f\colon K\rightarrow G$ (where $K$ is a compact metric space) for $A$ and simply put $L=f(K)$. This is shown by the following example.

\begin{example}
There exist a $G_{\delta}$ Haar meager set $A\subseteq\er$, a compact metric space $K$ and a witness function $f\colon K\rightarrow\er$ for $A$ such that $A\cap f(K)$ is comeager in $f(K)$.
\end{example}

\begin{proof}
By \cite{Keleti}, there is a compact set $M \subseteq \er$ with Hausdorff dimension $1$ such that for every $z \in \er$, the intersection $(z+M) \cap M$ contains at most one point. Let $L$ be a perfect compact subset of $M$. Then for every $z \in \er$ the intersection $(z+L) \cap L$ also contains at most one point.
Let $\{U_i \colon i \in \omega\}$ be a basis of nonempty open subsets of $L$. For every $i \in \omega$, we fix a perfect set $C_i \subseteq U_i$ which is nowhere dense in $U_i$. We define
\begin{equation*}
K = \bigcup\limits_{\substack{n\in\omega\\ n\ge 1}}\left\{ \left(x,\tfrac{1}{n}\right) \colon x \in \bigcup_{i=0}^n C_i \right\} \cup \left\{ (x,0) \colon x \in L \right\} \subseteq L \times [0,1],
\end{equation*}
so that $K$ is a compact metric space. We also define $f \colon K \rightarrow \er$ to be the projection to the first coordinate, i.e. $f(x,t) = x$ for every $(x,t) \in K$.
Finally, we define
\begin{equation*}
A = L \setminus \bigcup_{i \in \en} C_i \subseteq \er,
\end{equation*}
which is obviously a $G_{\delta}$ subset of $\er$.

Let us check that $f$ witnesses that $A$ is Haar meager. Take an arbitrary $z \in \er$. If $z=0$ then
\begin{equation*}
f^{-1}(z+A) = f^{-1}(A) = \left\{ (x,0) \in K \colon x \in L \setminus \bigcup_{i \in \omega} C_i \right\} \subseteq L \times \{0\}.
\end{equation*}
And if $z \neq 0$ then (using the fact that $A\subseteq L=f(K)$) we have
\begin{equation*}
f^{-1}(z+A) = f^{-1}\left((z+A) \cap L\right) \subseteq f^{-1}\left((z+L) \cap L\right),
\end{equation*}
which is a preimage of either singleton or empty set. In both cases, we easily conclude that $f^{-1}(z+A)$ is nowhere dense in $K$.

On the other hand, $L \setminus A$ is the union of nowhere dense subsets $C_i$, $i\in\omega$, of $L$, and so $A$ is a comeager subset of $L = f(K)$.
\end{proof}

The remainder of this section is concerned with the notion of naively Haar meager sets:
\begin{definition}
Let $G$ be a Polish group.
A set $X\subseteq G$ is called \emph{naively Haar meager} if there is a continuous map $f$ of a metrizable compact space $K$ into $G$ such that $f^{-1}(gXh)$ is meager in $K$ for every $g, h\in G$.

The set $K$ is called a witness compact for $X$.
\end{definition}

The following Lemma is a topological analogue of the measure theoretic one \cite[Lemma~3.2]{ElekesVidnyanszky2015}.

\begin{lemma}\label{translation}
Let $G$ be an abelian Polish group.
Let $P\subseteq G$ be a perfect set and $C\subseteq G$ a comeager subset.
Then there exists $g\in G$ such that $|C\cap gP|=\mathfrak c$.
\end{lemma}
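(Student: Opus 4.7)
The plan is to apply the Kuratowski--Ulam theorem to a suitably chosen comeager subset of the Polish product space $G \times P$. Since $P$ is a perfect subset of the Polish group $G$ (closed with no isolated points), $P$ with the subspace topology is a perfect Polish space. I would then consider the map $\psi \colon G \times P \to G$ given by $\psi(g, p) = g + p$. It is continuous and, crucially, open: a basic open set $U \times (V \cap P) \subseteq G \times P$ maps onto $\bigcup_{p \in V \cap P} (U + p)$, which is open as a union of translates of the open set $U$.

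Next, I would set $T := \psi^{-1}(C)$ and show that $T$ is comeager in $G \times P$. Since $C$ is comeager in $G$, one has $G \setminus C \subseteq \bigcup_n F_n$ with each $F_n$ closed and nowhere dense in $G$. Each $\psi^{-1}(F_n)$ is closed by continuity of $\psi$, and has empty interior by the openness of $\psi$: if a nonempty open $W \subseteq G \times P$ were contained in $\psi^{-1}(F_n)$, then $\psi(W) \subseteq F_n$ would be a nonempty open subset of $F_n$, contradicting its nowhere denseness. Hence $\psi^{-1}(F_n)$ is nowhere dense, so $(G \times P) \setminus T$ is meager and $T$ is comeager.

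Applying the Kuratowski--Ulam theorem, I would extract comeager many $g \in G$ for which the section $T_g = \{p \in P \colon g + p \in C\}$ is comeager in $P$, and fix one such $g$. Since $P$ is a nonempty perfect Polish space, every comeager subset of $P$ contains a dense $G_\delta$, which in turn contains a Cantor set (by a standard Cantor scheme construction exploiting that $P$ has no isolated points and admits a complete metric), and therefore has cardinality $\mathfrak c$. In particular $|T_g| = \mathfrak c$. Because $p \mapsto g + p$ is a bijection between $T_g$ and $C \cap (g + P)$, we conclude $|C \cap gP| = \mathfrak c$.

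The only subtle step is the openness of $\psi$: this is what ensures that preimages of nowhere dense sets in $G$ remain nowhere dense in $G \times P$ and justifies the meagerness of $(G \times P) \setminus T$. Without this translation invariance from the group structure, a meager set in $G$ could pull back to a large set in $G \times P$ (for instance if $P$ itself were meager in $G$), and the Kuratowski--Ulam step would collapse.
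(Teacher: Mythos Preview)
Your proof is correct, and it takes a genuinely different route from the paper's.

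The paper argues by contradiction: it passes to a Borel comeager $B\subseteq C$, assumes $|B\cap gP|<\mathfrak c$ for all $g$, invokes the perfect set dichotomy for Borel sets to conclude each $B\cap gP$ is countable, and then observes that the identity map on (a compact perfect subset of) $P$ witnesses that $B$ is Haar meager. Darji's theorem that Haar meager sets are meager then contradicts the comeagerness of $B$.

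Your approach is more direct and more self-contained. By exploiting the openness of the addition map $\psi\colon G\times P\to G$ you show $\psi^{-1}(C)$ is comeager, and then Kuratowski--Ulam immediately gives a $g$ (indeed, comeager many $g$) with $\{p\in P:g+p\in C\}$ comeager in $P$, hence of size $\mathfrak c$. This avoids both the Borel dichotomy and the appeal to Darji's ``Haar meager $\Rightarrow$ meager'' theorem, and in fact yields the stronger conclusion that $C\cap gP$ is comeager in $gP$ for comeager many $g$. The paper's argument, by contrast, ties the lemma back to the Haar meager machinery it is ultimately being used to support, which is thematically pleasing but logically heavier.
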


\begin{proof} 
Let $B$ be a Borel comeager subset of $C$. We will prove that $|B\cap gP|=\mathfrak c$ for some $g\in G$.
Suppose for a contradiction that there is no such $g\in G$. 
This means that $|B\cap gP|<\mathfrak c$ for every $g\in G$. Since Borel sets are either of size continuum or at most countable and the set $B\cap gP$ is Borel it follows that it is at most countable.
Hence also $gB\cap P$ is at most countable for every $g\in G$. Thus $gB\cap P$ is meager in $P$.
It follows that $B$ is Haar meager and thus by \cite{Darji} $B$ is also meager.
This is a contradiction, since $B$ was supposed to be comeager.
\end{proof}

The following is shown in \cite[Lemma~3.4]{ElekesVidnyanszky2015} to be an easy consequence of \cite{FarahSolecki}.

\begin{lemma}\label{subgroup}
Every uncountable Polish group has an uncountable Borel subgroup of uncountable index.
\end{lemma}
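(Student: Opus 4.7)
The plan is to reduce the statement to a structural fact about Polish groups provided by \cite{FarahSolecki}. Since $G$ is an uncountable Polish group, its cardinality equals $\mathfrak{c}$ and it contains a Cantor set; these two facts will let us build the desired subgroup as an $F_{\sigma}$ set and then argue about its index by Baire category.

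First, I would take a Cantor set $C\subseteq G$ and consider the subgroup $H=\langle C\rangle$ it generates. Writing $H=\bigcup_{n\in\omega}W_{n}$, where $W_{n}$ is the compact image of $(C\cup C^{-1})^{n}$ under $n$-fold multiplication, exhibits $H$ as an $F_{\sigma}$ (hence Borel) subgroup with $|H|\ge|C|=\mathfrak{c}$. Thus the nontrivial content is to arrange $[G:H]$ to be uncountable.

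Second, I would observe that it suffices to arrange $H$ to be meager in $G$: if $H$ were of countable index then $G$ would be a countable union of translates of the meager set $H$, contradicting that $G$ is a Polish space and hence non-meager in itself. So the problem reduces to choosing $C$ such that each compact word-set $W_{n}$ is nowhere dense. This is the step that genuinely requires \cite{FarahSolecki}: their work yields, in any uncountable Polish group, a compact perfect subset in sufficiently ``independent'' position that the subgroup it generates stays meager—roughly, a Cantor set for which no finite iterated product has non-empty interior.

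The main obstacle is exactly this last point: a naive choice of Cantor set can generate the whole group (the standard middle-thirds Cantor set $C\subseteq\er$ already satisfies $C+C=[0,2]$, so $\langle C\rangle=\er$ and the index becomes trivial). The content of \cite{FarahSolecki} is precisely to rule this out by producing a Cantor set whose iterated products remain nowhere dense, after which the meager Borel subgroup $H$ is at hand and the Baire-category argument above delivers uncountable index. A minor additional care is needed to keep the construction uniform across non-abelian Polish groups, replacing finite sums by words in $C\cup C^{-1}$, but this does not introduce real difficulties beyond the choice of~$C$.
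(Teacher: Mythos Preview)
Your proposal is correct and matches the paper's approach: the paper does not actually prove this lemma but simply records that it is shown in \cite[Lemma~3.4]{ElekesVidnyanszky2015} to be an easy consequence of \cite{FarahSolecki}, which is precisely the reduction you outline. Your sketch of the derivation---generate an $F_\sigma$ subgroup from a suitable Cantor set, use meagerness plus Baire category to force uncountable index, and invoke Farah--Solecki for the existence of such a Cantor set---is a faithful elaboration of why the cited consequence holds.
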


The following can be found in \cite[p.~5]{BeckerKechris}.

\begin{lemma}\label{perfect} 
Let $G$ be an abelian Polish group and let $H$ be a meager subgroup of $G$. Then there is a perfect partial transversal of $G/H$ (i.e. there is a perfect subset of $G$ which intersects each coset of $H$ in at most one point).
\end{lemma}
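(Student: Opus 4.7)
The plan is to apply the classical Mycielski theorem to the coset relation $R = \{(x,y) \in G \times G : x - y \in H\}$. A set $P \subseteq G$ is a partial transversal of $G/H$ precisely when $(P \times P) \cap R \subseteq \Delta_G$, so it suffices to produce a perfect $P$ with this intersection property. Writing $\sigma \colon G \times G \to G$ for the continuous subtraction map $\sigma(x,y) = x - y$, we have $R = \sigma^{-1}(H)$.

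I would first dispose of the trivial case in which $G$ has an isolated point: by translation invariance $G$ would then be discrete, so the singleton $\{0\}$ would be a nonempty open subset of $H$, contradicting the meagerness of $H$. Hence $G$ is perfect. Next I would prove that $R$ is meager in $G \times G$. Cover $H$ by closed nowhere dense sets $F_n$ with $H \subseteq \bigcup_n F_n$; then $R \subseteq \bigcup_n \sigma^{-1}(F_n)$, and each $\sigma^{-1}(F_n)$ is closed. If one of them had nonempty interior it would contain a basic open rectangle $U \times V$ with $U, V$ nonempty open, and consequently $U - V = \bigcup_{v \in V}(U - v)$ would be a nonempty open subset of $F_n$, contradicting nowhere-density. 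Thus each $\sigma^{-1}(F_n)$ is nowhere dense and $R$ is meager.

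Finally, I would invoke Mycielski's theorem (e.g.\ \cite[Theorem~19.1]{Kechris}): in a perfect Polish space $X$, every meager subset $M$ of $X \times X$ admits a perfect set $P \subseteq X$ with $(P \times P) \cap M \subseteq \Delta_X$. Applied to $R$, this yields a perfect $P \subseteq G$ such that $x - y \notin H$ for all distinct $x, y \in P$, which is exactly the desired perfect partial transversal. The only subtle step is the meagerness of $R$: since $H$ is not assumed to have the Baire property, one cannot directly invoke Kuratowski--Ulam on $R$ itself, but covering $H$ by closed nowhere dense sets first and checking nowhere-density of the closed preimages $\sigma^{-1}(F_n)$ by hand circumvents the issue.
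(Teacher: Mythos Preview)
Your argument is correct. The paper does not prove this lemma at all but merely cites \cite[p.~5]{BeckerKechris}; the argument there is precisely the Mycielski route you take: show that the coset equivalence relation $R=\{(x,y):x-y\in H\}$ is meager in $G\times G$ and apply Mycielski's theorem to obtain a perfect set of pairwise $R$-inequivalent points. Your handling of the two potential subtleties---ruling out isolated points (so that $G$ is a perfect Polish space) and establishing meagerness of $R$ without assuming $H$ has the Baire property by passing to closed nowhere dense covers $F_n$ and checking that each $\sigma^{-1}(F_n)$ has empty interior---is clean and correct. In short, you have supplied exactly the proof the paper delegates to a reference.
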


The next Theorem answers a question of Darji (\cite[Problem~1]{Darji}). We follow some ideas of \cite{ElekesVidnyanszky2015}.

\begin{theorem}
Let $G$ be an uncountable abelian Polish group.
Then there is a set $X\subseteq G$ which is naively Haar meager but not Haar meager.
\end{theorem}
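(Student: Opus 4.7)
The plan is to construct $X = \{x_\alpha : \alpha < \mathfrak{c}\}$ by transfinite recursion of length $\mathfrak{c}$, diagonalizing against every Borel Haar meager subset of $G$, following the pattern of \cite{ElekesVidnyanszky2015}. By Lemma \ref{subgroup}, fix an uncountable Borel subgroup $H \le G$ of uncountable index. Since a non-meager Borel subgroup of $G$ would, by the Steinhaus--Pettis observation recalled in the introduction (a non-Haar-meager Borel set $A$ satisfies $A - A \supseteq U$ for some neighborhood $U$ of $0$), be open and hence of countable index, $H$ must in fact be meager. Lemma \ref{perfect} then provides a perfect partial transversal $P$ of $G/H$, and since $H$ is an uncountable Borel subset of $G$ it contains a Cantor set $C \subseteq H$.

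Next, I would enumerate the Borel Haar meager subsets of $G$ as $\{B_\alpha : \alpha < \mathfrak{c}\}$, and at stage $\alpha < \mathfrak{c}$ apply Lemma \ref{translation} to the perfect set $P$ and the comeager set $G \setminus B_\alpha$ to obtain $g_\alpha \in G$ with $|(g_\alpha + P) \cap (G \setminus B_\alpha)| = \mathfrak{c}$. Since $P$ is a partial transversal, distinct points of $g_\alpha + P$ lie in distinct cosets of $H$; so these $\mathfrak{c}$ points represent $\mathfrak{c}$ distinct cosets, of which only the $|\alpha| < \mathfrak{c}$ cosets $\{x_\beta + H : \beta < \alpha\}$ are excluded. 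Hence I can choose $x_\alpha \in (g_\alpha + P) \setminus B_\alpha$ with $x_\alpha + H \neq x_\beta + H$ for all $\beta < \alpha$. Setting $X := \{x_\alpha : \alpha < \mathfrak{c}\}$, any Borel Haar meager hull of $X$ would coincide with some $B_\alpha$ and would contain $x_\alpha$, contradicting $x_\alpha \notin B_\alpha$; hence $X$ is not Haar meager.

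To verify that $X$ is naively Haar meager, I take the inclusion $f \colon C \hookrightarrow G$ as witness. Given $g \in G$, if $c_1, c_2 \in (g + X) \cap C$ with $c_i = g + x_{\alpha_i}$, then $c_1, c_2 \in H$ and $H$ is a subgroup, so $x_{\alpha_1} - x_{\alpha_2} = c_1 - c_2 \in H$; this forces $x_{\alpha_1} + H = x_{\alpha_2} + H$ and hence $\alpha_1 = \alpha_2$ by construction, so $c_1 = c_2$. Consequently $f^{-1}(g + X) = (g + X) \cap C$ has at most one element and is meager in the perfect compact space $C$ for every $g \in G$. The only nontrivial step is choosing the witness: using $P$ itself will not work, as $(g + X) \cap P$ can be large in general, whereas placing the witness inside the subgroup $H$ constrains the preimage to lie in the single coset $-g + H$, and the one-element-per-coset property of $X$ then yields the bound.
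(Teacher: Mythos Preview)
Your argument is correct and follows essentially the same strategy as the paper: both proofs fix an uncountable Borel subgroup $H$ of uncountable index, take a perfect partial transversal $P$ of $G/H$ and a perfect set inside $H$ as the witness compact, and then build $X$ as a partial transversal of $G/H$ by transfinite recursion of length $\mathfrak c$, using Lemma~\ref{translation} at each step to avoid the forbidden cosets. The only cosmetic difference is the family you diagonalize against: the paper enumerates all meager $F_\sigma$ sets (so that the resulting $X$ is in fact non-meager, a slightly stronger conclusion), whereas you enumerate the Borel Haar meager sets directly; correspondingly, the paper invokes Lemma~\ref{translation} by contradiction while you use it constructively, but the content is the same.
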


\begin{proof}
Let $H$ be an uncountable Borel subgroup of $G$ with an uncountable index in $G$ (such a subgroup exists by Lemma~\ref{subgroup}). Since $H$ has the Baire property and is of an uncoutable index it is easy to see that it is meager. Thus by Lemma~\ref{perfect} there is a perfect partial transversal $P$ of $G/H$.
Since $H$ is uncountable and Borel there is a perfect set $K\subseteq H$ which will be used as a witness compact.
Note that if $T$ is an arbitrary partial transversal of $G/H$ then $T$ is naively Haar meager.

Let $\{F_\alpha\colon \alpha<\mathfrak c\}$ be the collection of all meager sets in $G$ which are of type $F_\sigma$. We will construct by induction a set $X=\{x_\alpha\colon \alpha<\mathfrak c\}\subseteq G$ such that 
\[x_\alpha\notin F_\alpha\cup \bigcup_{\beta<\alpha}x_\beta H.\]

To verify that this is possible it is enough to show that for a fixed $\alpha<\mathfrak c$ the set $F_\alpha\cup \bigcup_{\beta<\alpha}x_\beta H$ is a proper subset of $G$. Suppose that it equals $G$. Then the set $C=\bigcup_{\beta<\alpha}x_\beta H$ is comeager. Moreover $|C\cap gP|\leq |\alpha|<\mathfrak c$ for every $g\in G$. This is a contradiction with Lemma~\ref{translation}.

Since $X$ is a partial transversal it follows that it is naively Haar meager.
Moreover $X$ is not meager because it is not contained in any $F_\alpha$ for $\alpha<\mathfrak c$.
Hence $X$ is not Haar meager.
\end{proof}

It can be easily shown that under the Continuum Hypothesis, for every uncountable group $G$ there is a naively Haar meager set $X\subseteq G\times G$ such that $(G\times G)\setminus X$ is naively Haar meager as well. In fact it is enough to consider $X\subseteq G\times G$ as a well-order of $G$ which is of type $\omega_1$. It follows that all vertical sections of $X$ are countable and similarly for the horizontal sections of $G\setminus X$.
It follows that under the Continuum Hypothesis naively Haar meager sets do not form an ideal in the groups of the form $G\times G$. 

The reasoning from the preceding part is known in the context of naively Haar null sets \cite{ElekesVidnyanszky2015}.
In the following Proposition we are, however, able to show that this does not hold only for the squares of groups. Note that the Proposition can be analogously stated and proved also for naively Haar null sets.

\begin{proposition}
Assume the Continuum Hypothesis. Let G be an uncountable abelian Polish group. Then there exists a naively Haar meager set $X\subseteq G$ such that $G\setminus X$ is naively Haar meager as well. 
In particular, naively Haar meager sets do not form an ideal in $G$.
\end{proposition}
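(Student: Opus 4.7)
The plan is to build a partition $G = X \sqcup Y$ by transfinite induction of length $\omega_1$ (using CH) so that $X$ and $G \setminus X$ each intersect the translates of a suitable perfect compact set in only countably many points; the inclusions $K_1 \hookrightarrow G$ and $K_2 \hookrightarrow G$ will then witness that $X$ and $Y = G \setminus X$ are naively Haar meager.

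The first step is to construct two perfect compact sets $K_1, K_2 \subseteq G$ with the property that $|K_1 \cap (g + K_2)| \leq 1$ for every $g \in G$. I would invoke Lemma~\ref{subgroup} to get an uncountable Borel subgroup $H \leq G$ of uncountable index; by the Baire property such $H$ is meager, so Lemma~\ref{perfect} produces a perfect partial transversal of $G/H$, and I take $K_2$ to be any perfect compact subset of it (still a partial transversal, since subsets of partial transversals are partial transversals). For $K_1$ I choose a perfect compact subset of the uncountable Borel set $H$, translated so that $0 \in K_1$. Then $K_1 \subseteq H$ lies in a single $H$-coset while each translate $g + K_2$ is still a partial transversal of $G/H$, so $K_1$ and $g + K_2$ meet in at most one point.

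Next I would enumerate $G = \{z_\alpha : \alpha < \omega_1\}$ and build commitments $X_\alpha, Y_\alpha$ to $X$ and $Y$ by transfinite recursion. At stage $\alpha$, with $X_{<\alpha} := \bigcup_{\beta < \alpha} X_\beta$ and $Y_{<\alpha} := \bigcup_{\beta < \alpha} Y_\beta$, I set $N^1_\alpha := X_{<\alpha} \cap (z_\alpha + K_1)$ and commit $(z_\alpha + K_1) \setminus N^1_\alpha$ to $Y_\alpha$; then I set $N^2_\alpha := (Y_{<\alpha} \cup Y_\alpha) \cap (z_\alpha + K_2)$ and commit $(z_\alpha + K_2) \setminus N^2_\alpha$ to $X_\alpha$. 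The single-point-intersection property together with $|\alpha| \leq \aleph_0$ forces both $N^1_\alpha$ and $N^2_\alpha$ to be countable (each is a union of at most $|\alpha|$ singletons), hence meager in the perfect Polish sets $z_\alpha + K_i$. The requirement $0 \in K_1$ ensures that every $z_\alpha$ is classified (into $X$ or $Y$) by stage $\alpha$, so the commitments yield a genuine partition $G = X \sqcup Y$.

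The hard part will be verifying consistency across the $\omega_1$ stages: later commitments must not silently enlarge $X \cap (z_\alpha + K_1)$ beyond $N^1_\alpha$. The key observation is that any candidate new point $p \in (z_\gamma + K_2) \cap (z_\alpha + K_1)$ with $\gamma > \alpha$ is a single point, and it must lie either in $N^1_\alpha \subseteq X_{<\alpha}$ (in which case $p$ is already in $X$, so the commitment at stage $\gamma$ adds nothing new) or in $(z_\alpha + K_1) \setminus N^1_\alpha \subseteq Y_\alpha \subseteq Y_{<\gamma}$ (in which case $p$ is absorbed into $N^2_\gamma$ and not added to $X$ at stage $\gamma$). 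A transfinite induction on $\gamma \geq \alpha$ then yields $X \cap (z_\alpha + K_1) = N^1_\alpha$, which is countable and therefore meager in $z_\alpha + K_1$; symmetrically $Y \cap (z_\alpha + K_2) = N^2_\alpha$ is meager. Since every $g \in G$ equals some $z_\alpha$, this suffices to conclude the naive Haar meagerness of both $X$ and $Y$.
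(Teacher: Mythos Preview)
Your argument is correct. You use exactly the same raw ingredients as the paper --- the uncountable Borel subgroup $H$ of uncountable index from Lemma~\ref{subgroup}, a perfect compact $K_1\subseteq H$, and a perfect compact $K_2$ inside a partial transversal of $G/H$ from Lemma~\ref{perfect} --- and the same pivotal observation that any translate of $K_1$ meets any translate of $K_2$ in at most one point. The difference lies only in how the set $X$ is produced. You run a two-sided back-and-forth recursion, committing at stage $\alpha$ almost all of $z_\alpha+K_1$ to $Y$ and almost all of $z_\alpha+K_2$ to $X$, and then have to check that later stages never violate earlier commitments. The paper instead writes down $X$ by an explicit formula: enumerating both $G=\{g_\alpha\}$ and the cosets $\{H_\alpha\}$ of $H$, it sets
\[
X=\bigcup_{\alpha<\omega_1}\Bigl(g_\alpha P\setminus\bigcup_{\beta<\alpha}H_\beta\Bigr),
\]
so that $gX\cap K_1$ is visibly countable (each coset contributes at most one point of $gP$, and only countably many cosets have been ``used''), while $(G\setminus X)\cap g_\alpha P\subseteq g_\alpha P\cap\bigcup_{\beta<\alpha}H_\beta$ is countable for the same reason. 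The paper's route avoids your consistency verification and the need for $0\in K_1$ to ensure the partition exhausts $G$; your route has the mild advantage of making the symmetry between $X$ and $G\setminus X$ structurally explicit in the construction itself.
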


\begin{proof}
By Lemma~\ref{subgroup} there exists an uncoutable Borel subgroup $H\subseteq G$ of an uncountable index.
Since $H$ has the Baire property and its index in $G$ is uncountable it follows that $H$ is meager.
Thus by Lemma~\ref{perfect} there is a perfect partial transversal of $G/H$. Let us index $\{g_\alpha\colon \alpha<\omega_1\}=G$ and let $\{H_\alpha\colon \alpha<\omega_1\}$ be all the cosets of $H$ in $G$. Simply let
\[X=\bigcup_{\alpha<\omega_1}\left(g_\alpha P\setminus\bigcup_{\beta<\alpha}H_\beta\right).\]
If we fix an arbitrary perfect set $K\subseteq H$ it is a witness compact for $X$ (as $gX\cap K$ is at most countable for every $g\in G$) and thus $X$ is naively Haar meager. On the other hand for an arbitrary $g\in G$ there is $\alpha<\omega_1$ such that $g=g_{\alpha}$ and then 
\[(G\setminus X)\cap g P=g_\alpha P\setminus X\subseteq g_\alpha P\setminus \left(g_\alpha P\setminus\bigcup_{\beta<\alpha} H_\beta\right)= g_\alpha P\cap \bigcup_{\beta< \alpha} H_\beta.\]
Since the last set is countable it follows that any translation of $G\setminus X$ intersects $P$ in a countable and thus meager set. It follows that $G\setminus X$ is naively Haar meager as well.
\end{proof}



\section{Cartesian products and $\mathcal{HM}$}

The Cartesian products of (non-)Haar meager sets were already studied in \cite{Jablonska}. It was asked in \cite{Jablonska} whether the Cartesian product of two non-Haar meager Borel sets is non-Haar-meager. We affirmatively answer this question by proving Theorem \ref{T:Products} (note that the easier implication (ii)$\Rightarrow$(i) from Theorem \ref{T:Products} was already proved in \cite[Theorem 3]{Jablonska}). We remark that the question was stated for abelian groups only, but our theorem works also in the non-abelian setting.

\begin{theorem}
\label{T:Products}
Let $G_1,G_2$ be Polish groups, and let $A_1\subseteq G_1$ and $A_2\subseteq G_2$ be analytic sets. Then the following are equivalent:
\begin{enumerate}[(i)]
\item $A_1$ and $A_2$ are non-Haar meager in their respective groups;
\item $A_1\times A_2$ is non-Haar meager in $G_1\times G_2$.
\end{enumerate}
\end{theorem}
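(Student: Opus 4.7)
The direction (ii)$\Rightarrow$(i) is already covered by \cite[Theorem~3]{Jablonska}, so I focus on the contrapositive of (i)$\Rightarrow$(ii): assuming $A_1\times A_2$ is Haar meager, I will prove that at least one of $A_1,A_2$ must also be Haar meager. Since $A_1,A_2$ are analytic and Proposition~\ref{P:HMforAnalytic} characterises Haar meagerness of analytic sets purely by the existence of a continuous witness function, the whole argument can be carried out on the level of witnesses, without producing Borel hulls explicitly.

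Fix a compact metric space $K$ and a continuous witness $f=(f_1,f_2)\colon K\to G_1\times G_2$ for $A_1\times A_2$. Using $(g_1,g_2)(A_1\times A_2)(h_1,h_2)=g_1A_1h_1\times g_2A_2h_2$, the hypothesis amounts to
\begin{equation*}
f_1^{-1}(g_1 A_1 h_1)\cap f_2^{-1}(g_2 A_2 h_2)\text{ is meager in }K\qquad(g_i,h_i\in G_i,\ i=1,2).
\end{equation*}
Assume now that $A_1$ is not Haar meager. By Proposition~\ref{P:HMforAnalytic} applied to $f_1$ there exist $g_1^\ast,h_1^\ast\in G_1$ such that $M:=f_1^{-1}(g_1^\ast A_1 h_1^\ast)$ is non-meager in $K$; being analytic (a continuous preimage of an analytic set), $M$ has the Baire property, so it is comeager in some nonempty open $U\subseteq K$. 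Set $L:=\overline U$; then $L$ is a nonempty compact metric space in which $U$ is open and dense.

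The crux is to verify that $\tilde f_2:=f_2\rest_L\colon L\to G_2$ witnesses the Haar meagerness of $A_2$. Fix $g_2,h_2\in G_2$ and set $N:=f_2^{-1}(g_2 A_2 h_2)$. The set $L\setminus U$ is closed with empty interior in $L$ (by density of $U$), hence nowhere dense in $L$; for the remainder,
\begin{equation*}
N\cap U\;\subseteq\;(M\cap N)\;\cup\;(U\setminus M),
\end{equation*}
where $M\cap N$ is meager in $K$ by the displayed hypothesis and $U\setminus M$ is meager in $U$ by the choice of $U$. Since $U$ is open in $L$, meager subsets of $U$ remain meager in $L$, and so $\tilde f_2^{-1}(g_2 A_2 h_2)=(N\cap U)\cup(N\cap(L\setminus U))$ is meager in $L$. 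Proposition~\ref{P:HMforAnalytic}, now applied to the analytic $A_2$ and the witness $\tilde f_2$, yields $A_2\in\mathcal{HM}$, completing the argument.

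The main obstacle is conceptual rather than computational: one has to recognise that the failure of Haar meagerness of $A_1$ allows one to localise to an open set $U$ on which some preimage slice of $A_1$ is comeager, and that taking $L=\overline U$ converts this into a bona fide compact witness space for $A_2$. The use of Proposition~\ref{P:HMforAnalytic} is essential at both ends of the argument: it extracts the ``bad'' pair $(g_1^\ast,h_1^\ast)$ from the non-Haar-meagerness of $A_1$, and it allows the final passage from the witness $\tilde f_2$ to the Haar meagerness of $A_2$ without any explicit construction of a Borel hull.
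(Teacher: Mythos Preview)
Your proof is correct and follows essentially the same approach as the paper. The paper proves (i)$\Rightarrow$(ii) directly rather than via the contrapositive, and it packages the ``restrict to $\overline{U}$'' step as a separate preparatory lemma (Lemma~\ref{L:suitableshift}), but the mathematical content---localising to an open $U$ where a preimage of a translate of $A_1$ is comeager, then using $f_2\rest_{\overline U}$ together with Proposition~\ref{P:HMforAnalytic} to handle $A_2$---is identical to what you do.
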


To prove Theorem \ref{T:Products}, we need a simple lemma:

\begin{lemma}\label{L:suitableshift}
Let $G$ be a Polish group and $A\subseteq G$ an analytic set which is non-Haar meager. let $K$ be a compact metric space and $f\colon K\to G$ be a continuous mapping. Then for any nonempty open set $U\subseteq K$ there exist $g, h\in G$ such that $f^{-1}(gAh)\cap U$ is non-meager.
\end{lemma}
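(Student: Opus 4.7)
I plan to argue by contradiction. Suppose on the contrary that for every $g,h\in G$ the set $f^{-1}(gAh)\cap U$ is meager in $K$. The goal is then to build a witness forcing $A$ itself to be Haar meager, contradicting our hypothesis. The natural candidate is the restriction of $f$ to $K':=\overline{U}$, the closure of $U$ in $K$: being a closed subset of a compact space, $K'$ is compact, and $f':=f\rest_{K'}\colon K'\to G$ remains continuous.

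To see that $f'$ is indeed a witness, fix $g,h\in G$ and decompose
\[(f')^{-1}(gAh)=\bigl(f^{-1}(gAh)\cap U\bigr)\cup\bigl(f^{-1}(gAh)\cap (K'\setminus U)\bigr).\]
The second piece is contained in $K'\setminus U$, which is nowhere dense in $K'$ since $U$ is dense in $K'$ by construction. For the first piece, the contradiction hypothesis says that $f^{-1}(gAh)\cap U$ is meager in $K$; as $U$ is open in $K$, this set is also meager in $U$; and as $U$ is open and dense in $K'$, it is therefore meager in $K'$ as well. Hence $(f')^{-1}(gAh)$ is meager in $K'$ for every $g,h\in G$. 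Because $A$ is analytic, Proposition~\ref{P:HMforAnalytic} then delivers that $A$ is Haar meager, contradicting our hypothesis and completing the proof.

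I do not expect any serious obstacles here; the argument is essentially a localization-and-restriction maneuver. The one point that requires a little care is justifying that meagerness transfers correctly along the chain $K\supseteq U\subseteq K'$, which uses only that $U$ is open in $K$ and open-dense in $K'$. The essential input from elsewhere in the paper is Proposition~\ref{P:HMforAnalytic}: it bypasses the need to cook up a Borel hull for $A$ that is still compatible with the new witness $f'$, which would otherwise be the awkward part of the argument.
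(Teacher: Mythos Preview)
Your proof is correct and follows essentially the same approach as the paper: argue by contradiction, restrict $f$ to $\overline{U}$, and invoke Proposition~\ref{P:HMforAnalytic}. The paper's version simply declares that $(f\rest_{\overline{U}})^{-1}(gAh)$ is meager ``easy to see,'' whereas you spell out the meagerness transfer via the decomposition and the chain $K\supseteq U\subseteq K'$, which is a welcome clarification.
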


\begin{proof}
Assume the contrary, i.e. there exist a compact metric space $K$, a continuous mapping $f\colon K\to G$, and an open set $U\subseteq K$ such that for each $g,h\in G$, $f^{-1}(gAh)\cap U$ is meager in $K$. It is then easy to see that  $(f\rest_{\overline{U}})^{-1}(gAh)$ is meager for any $g,h \in G$, and Proposition~\ref{P:HMforAnalytic} now gives that $A$ is Haar meager.
\end{proof}

\begin{proof}[Proof of Theorem \ref{T:Products}]
To prove implication (i)$\Rightarrow$(ii), let us observe that $A_1\times A_2$ is analytic, and so we can again use Proposition~\ref{P:HMforAnalytic} to free us of the obligation to consider Borel hulls. 

Take any $f\colon K\to G_1\times G_2$ continuous where $K$ is a compact metric space; we shall find points $(g_1,g_2), (h_1, h_2)\in G_1\times G_2$ such that $f^{-1}((g_1,g_2)(A_1\times A_2)(h_1,h_2))$ is non-meager in $K$, showing that $f$ does not witness Haar meagerness of $A_1\times A_2$.

Of course, there are continuous mappings $f_1\colon K\to G_1$ and $f_2\colon K\to G_2$ such that $f=(f_1,f_2)$. Since $A_1$ is not Haar meager in $G_1$, there exist $g_1, h_1\in G_1$ such that $f_1^{-1}(g_1A_1h_1)$ is non-meager in $K$. From the facts that $A_1$ is analytic and $f_1$ is continuous we obtain that $f_1^{-1}(g_1A_1h_1)$ is analytic; in particular it has the Baire property by the Lusin-Sierpi\'nski theorem (see e.g. \cite[Theorem 21.6]{Kechris}). Thus there is an open set $U\subseteq K$ such that $f^{-1}_1(g_1A_1h_1)\cap U$ is comeager in $U$. We now apply Lemma~\ref{L:suitableshift} to $A_2$, $f_2$ and $U$, obtaining $g_2, h_2\in G_2$ such that $f^{-1}_2(g_2A_2h_2)$ is non-meager in $U$. We have:
\begin{IEEEeqnarray*}{rCl}
& & f^{-1}((g_1,g_2)(A_1\times A_2)(h_1,h_2)) \\
& = & f^{-1}((g_1A_1h_1)\times(g_2A_2h_2)) \\ 
& = & f^{-1}((g_1A_1h_1)\times G_2) \cap f^{-1}(G_1\times (g_2A_2h_2)) \\
& = & f^{-1}_1 (g_1A_1h_1) \cap f^{-1}_2(g_2A_2h_2).
\end{IEEEeqnarray*}
The last set is clearly non-meager in $U$, so it is non-meager in $K$. 

To prove the opposite implication, let us assume that $A_1$ is Haar meager, take a witness function $f\colon K\to G_1$ and fix any point $h\in G_2$. Then one can readily see that the mapping $\hat{f}\colon K\to G_1\times G_2$ defined by $a\mapsto (f(a), h)$ witnesses the Haar meagerness of $A_1\times A_2$.
\end{proof}

Although Haar meagerness behaves nicely with respect to Cartesian product, a Fubini type theorem cannot hold as shown by the next example (which is known to work also for the $\sigma$-ideal of Haar null sets). 

The example was pointed out to one of the authors by M.~Elekes for which he has our thanks. The example seems to be rather standard, but we failed to find it in the literature, and so we state it for the sake of completeness. Note that the example works also for Haar null.

\begin{example}
There exists a non-Haar meager set $C$ in $\zet^{\omega}\times\zet^{\omega}$ such that in one direction all its sections are Haar meager, and in the other direction there are non-Haar meager many sections which are non-Haar meager.
\end{example}

\begin{proof}
Let us go straight to the definition of the desired set $C$:
\begin{equation*}
C = \{ (s,t)\in\zet^{\omega}\times\zet^{\omega} \colon t_n\le s_n\le 0 \text{ for every }  n\in\omega \}.
\end{equation*}
Now, take any $t\in\zet^{\omega}$; we want to observe that the corresponding horizontal section of $C$ is Haar meager. We have
\begin{equation*}
C\cap (\zet^{\omega}\times \{ t \}) = \{s\in\zet^{\omega}\colon t_n\le s_n\le 0 \text{ for every }  n\in\omega\}\times \{t\}.
\end{equation*}
But this set is easily seen to be compact (it is even empty if there is $n\in\omega$ such that $t_n>0$), and therefore it is also Haar meager (see \cite[Corollary 1]{Jablonska}).

To see our claim about the sections in the other direction, observe that the set of all (coordinatewise) non-positive $s\in\zet^{\omega}$ contains a translate of each compact set in $\zet^{\omega}$.
Therefore this set is not Haar meager and it suffices to show that for any (coordinatewise) non-positive $s\in\zet^{\omega}$, the corresponding section of $C$ is non-Haar meager.
We have
\begin{equation*}
C\cap(\{s\}\times\zet^{\omega})=\{s\}\times\{t\in\zet^{\omega}\colon t_n\le s_n \text{ for every }  n\in\omega\},
\end{equation*}
so the corresponding section of $C$, being a translate of the set of all (coordinatewise) non-positive $t\in\zet^{\omega}$, is not Haar meager.

Finally, let us prove that the set $C$ is not Haar meager by proving that $C$ contains a translation of each compact set.
For $(s,t)\in\zet^\omega\times \zet^\omega$, let us define the continuous projections
\begin{align*}
\pi^1_n(s,t) & = s_n, \text{ and} \\
\pi^2_n(s,t) & = t_n.
\end{align*}
Now, let $K\subseteq \zet^\omega\times \zet^\omega$ be compact; we claim that $(a,b)+K\subseteq C$ if we define for each $n\in\omega$, 
\begin{align*}
a_n = - \max \pi^1_n(K) \quad \text{and}\quad b_n = - \max \pi^2_n(K) - \diam \pi^1_n(K).
\end{align*}
Indeed, let $(\tilde{s},\tilde{t})\in K$ and set $s=\tilde{s}+a$ and $t=\tilde{t}+b$. We want to verify that $(s,t)\in C$. But from the definitions it is clear that for each $n\in\omega$ we have $s(n)\leq 0$ and
\begin{align*}
t(n) & =\tilde{t}(n)-\max\pi^2_n(K) - \diam \pi^1_n(K) \\ 
&\leq 0- (\max \pi^1_n(K) - \min \pi^1_n(K))\\
&\leq \tilde{s}(n)-\max \pi^1_n(K) \\
&= s(n).
\end{align*}
Therefore $(s,t)\in C$, and it follows that $(a,b)+K\subseteq C$.
\end{proof}

\section{Decompositions of groups into small sets}

By classical results, any uncountable locally compact abelian Polish group can be decomposed into two sets, one of them meager and the other Haar null (see e.g. \cite[Theorem 16.5]{Oxtoby}).
Recall that in the locally compact case, the family $\mathcal{HM}$ coincides with the $\sigma$-ideal of meager sets.
The following question was posed in \cite{Jablonska}.

\begin{question}[{\cite[Problem 4]{Jablonska}}]
Let $G$ be any abelian Polish group which is not locally compact. Can $G$ be decomposed into two disjoint sets $A$ and $B$ where $A$ is Haar meager in $G$ and $B$ is Haar null in $G$?
\end{question}

It was noted in \cite{Jablonska} that the answer is positive if $G$ is one of the sequence spaces $c_0$, $c$ or $l_p$, $p\geq 1$. Theorem \ref{generalization of sequence spaces} is a straightforward generalization of this observation. In Theorem \ref{clopen basis}, we show that such a decomposition is also possible in several other cases.

\begin{theorem}
	\label{generalization of sequence spaces}
	Let $G$ be a Polish group and $H\leq G$ be an uncountable closed subgroup of the center of $G$. Let $H$ be the union of a Haar null set in $H$ and a Haar meager set in $H$. Then $G$ is also the union of a Haar null set in $G$ and a Haar meager set in $G$.
	
	In particular, this holds for $G=\er^{\omega}$ or $G=X$ where $X$ is a Banach space.
\end{theorem}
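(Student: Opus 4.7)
The plan is to lift the given decomposition of $H$ to a decomposition of $G$ via a Borel selector of the quotient map $\pi\colon G\to G/H$. The key point is that since $H$ is central in $G$, the witness measure and witness compact from the $H$-decomposition will extend to witnesses for the corresponding $G$-decomposition after passing to all cosets of $H$ simultaneously.

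First I would invoke the classical descriptive-set-theoretic result that the projection $\pi\colon G\to G/H$ from a Polish group onto the quotient by a closed normal subgroup admits a Borel section $s\colon G/H\to G$; this is the main piece of external machinery. Using $s$, define the Borel map $\psi\colon G\to H$ by $\psi(x)=s(\pi(x))^{-1}x$; one checks that $\psi$ takes values in $H$ and that its restriction to every coset of $H$ is a bijection onto $H$. Next, pick Borel hulls $B_N\supseteq N$ and $B_M\supseteq M$ in $H$, a witness Borel probability measure $\mu$ on $H$ for $B_N$, and a witness function $f\colon K\to H$ for $B_M$. Since $N\cup M=H$ and both hulls lie in $H$, we automatically have $B_N\cup B_M=H$. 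Set $\tilde N=\psi^{-1}(B_N)$ and $\tilde M=\psi^{-1}(B_M)$; these are Borel subsets of $G$ with $\tilde N\cup\tilde M=\psi^{-1}(H)=G$.

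It remains to verify that $\tilde N$ is Haar null in $G$ and $\tilde M$ is Haar meager in $G$. Fix $g,g'\in G$, put $w=g^{-1}g'^{-1}$, and set $h=s(\pi(w))^{-1}w\in H$. Using centrality of $H$, for every $y\in H$ one has $g^{-1}yg'^{-1}=wy$, and therefore $\psi(g^{-1}yg'^{-1})=hy$. From this one reads off the identities $(g\tilde Ng')\cap H=h^{-1}B_N$ and, since $f(K)\subseteq H$, $f^{-1}(g\tilde Mg')=f^{-1}(h^{-1}B_M)$. The first set has $\mu$-measure zero by the Haar-nullity of $B_N$ in $H$, and the second is meager in $K$ by the Haar-meagerness of $B_M$ in $H$. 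Hence extending $\mu$ by zero to $G$ gives a Borel probability measure on $G$ that witnesses the Haar-nullity of $\tilde N$, and composing $f$ with the inclusion $H\hookrightarrow G$ gives a witness function for the Haar-meagerness of $\tilde M$.

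For the concrete cases $G=\er^\omega$ or $G=X$ a Banach space, I would take $H$ to be a closed one-dimensional copy of $\er$ (say the first-coordinate subgroup of $\er^\omega$, or $\{tv\colon t\in\er\}$ for any nonzero $v\in X$). Such an $H$ is locally compact and uncountable, so by the classical theorem of Oxtoby it decomposes into a Lebesgue null set and a meager set, which coincide with a Haar null and a Haar meager set in the locally compact setting. The main technical point is the existence of the Borel section $s$; this is standard but is where descriptive set theory genuinely enters, and once $\psi$ has been constructed the rest is a short direct computation resting only on the centrality of $H$.
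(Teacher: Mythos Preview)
Your proof is correct and follows essentially the same approach as the paper. The only cosmetic difference is that the paper works with a Borel selector $M\subseteq G$ for the cosets of $H$ and forms the sets $MA$, $MB$ directly, whereas you package the same data as a Borel section $s\colon G/H\to G$ and a retraction $\psi(x)=s(\pi(x))^{-1}x$, then pull back via $\psi$; since $\psi^{-1}(B_N)=s(G/H)\cdot B_N$, these constructions produce literally the same sets, and your formulation makes Borelness slightly more transparent (preimage under a Borel map rather than injective continuous image).
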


\begin{proof}
	Note that without further reminders, we repeatedly use the fact that $H$ is a subset of the center of $G$.
	
There exist sets $A,B\subseteq H$ such that $A$ is Haar null in $H$, $B$ is Haar meager in $H$ and $A\cup B=H$. We can assume that $A$ and $B$ are Borel. Thus there exist a compact set $K$, a continuous function $f\colon K\to H$ and a Borel probability measure $\mu$ on $H$ such that for all $x\in H$ we have $\mu(xA)=0$ and $f^{-1}(xB)$ is meager in $K$. By \cite[Theorem 12.17]{Kechris} we can find a Borel set $M\subseteq G$ meeting every coset of $H$ in exactly one point. This means that $MH=G$ and that for $x,y\in M$, $x\neq y$ we have $xH\cap yH=\emptyset$. Thus the group operation is a continuous bijection of set $M\times H$ onto $G$. We set $\tilde{A}=MA$ and $\tilde{B}=MB$. By \cite[Theorem 15.1]{Kechris}, continuous injective images of Borel sets are Borel. Thus $\tilde{A}$ and $\tilde{B}$ are Borel. Clearly, $\tilde{A}\cup\tilde{B}=G$. Thus it remains to prove that $\tilde{A}$ is Haar null and $\tilde{B}$ is Haar meager. Let $y_1,y_2\in G$ be arbitrary. Then there exist $x_i\in H$ and $m_i\in M$ such that $y_i=m_ix_i$, $i=1,2$. Clearly, there exists exactly one $n\in M$ such that $y_1nHy_2\cap H\neq\emptyset$ and there exists $z\in H$ such that $nz=m_1^{-1}m_2^{-1}$. Thus $(y_1MAy_2)\cap H=(y_1nAy_2)\cap H=x_1x_2z^{-1}A$. So, 
\[\mu(y_1\tilde{A}y_2)=\mu((y_1MAy_2)\cap H)=\mu(x_1x_2z^{-1}A)=0.\] Similarly, 
\begin{eqnarray}\nonumber
	f^{-1}(y_1\tilde{B}y_2)=f^{-1}((y_1MBy_2)\cap H)=f^{-1}(x_1x_2z^{-1}B).
\end{eqnarray}
Thus $f^{-1}(y_1\tilde{B}y_2)$ is meager in $K$.
\end{proof}

For the rest of this section, we need to introduce some notation.

\begin{notation}
	\normalfont{
		We denote by $2^{<\omega}$ the set of all finite sequences of elements of $\{0,1\}$. For our purposes, it is convenient to exclude the empty sequence, so we suppose that each $s\in 2^{<\omega}$ has a strictly positive length, denoted by $l(s)$.
		For $s=(s_0,\ldots,s_{l(s)-1})\in 2^{<\omega}$ and $i\in\{0,1\}$, we write $s^{\wedge}i$ for the sequence $(s_0,\ldots,s_{l(s)-1},i)\in 2^{<\omega}$.
		For $\alpha=(\alpha_i)_{i=0}^{\infty}\in 2^{\omega}$ and $l\ge 1$, we denote by $\alpha\rest_l$ the restriction $(\alpha_0,\ldots,\alpha_{l-1})\in 2^{<\omega}$ of $\alpha$.
		For $s\in 2^{<\omega}$, we denote by $I_s$ the set $\{\alpha\in 2^{\omega}\colon \alpha\rest_{l(s)}=s\}\subseteq 2^{\omega}$.
		If $s\in 2^{<\omega}$ and $\alpha\in 2^{\omega}$ we write $s\subseteq\alpha$ if $s$ is an initial segment of $\alpha$.
		We use analogous notation for the sets $3^{<\omega}$ of all finite sequences of elements of $\{0,1,2\}$ and $\zet^{<\omega}$ of all finite sequences of elements of $\zet$.
		
		If $(G,+)$ is a group and $g\in G$ then by $0g$ we mean the identity element of $G$. If moreover $z',z''\in\zet$ are such that $z''<0<z'$ then by $z'g$ we mean 
		\begin{equation*}
		\underbrace{g+\ldots+g}_{z' \text{-times}}
		\end{equation*}
		and by $z''g$ we mean
		\begin{equation*}
			\underbrace{-g-\ldots-g}_{(-z'') \text{-times}}.
			\end{equation*}
		For $A\subseteq G$, we denote by $\left\langle A\right\rangle$ the smallest subgroup of $G$ containing every element of $A$.
		
		For $z,\tilde z\in\zet$, we write $\text{par}(z,\tilde z)$ as a shortcut for the statement `$z$ and $\tilde z$ have the same parity'.
	}
\end{notation}

Example \ref{priklad} is a particular case of Theorem \ref{clopen basis}. We still include it here as the main idea of the proof seems to be much more transparent in this simple setting.

\begin{example}\label{priklad}
	The group $\zet^{\omega}$ is the union of a Haar null set and a Haar meager set.
\end{example}

\begin{proof}
	Let $(s_n)_{n\in\omega}$ be an enumeration of the set $\zet^{<\omega}$. We define
	\begin{IEEEeqnarray*}{rCl}
		A^n_k & = & \{ x \in \zet^{\omega} \colon s_n \subseteq x \text{ and } x(i) \text{ is even}\\
		& & \text{for every }l(s_n) \leq i < l(s_n) + n + k \}, \ \ \ n,k\in\omega,\\
		A_k & = & \bigcup_{n \in \omega} A^n_k, \ \ \ k\in\omega,\\
		A & = & \bigcap_{k \in \omega} A_k.
	\end{IEEEeqnarray*}
	We will show that (a) $A$ is Haar null while (b) $\zet^{\omega} \setminus A$ is Haar meager.
	
	(a) Let $K =2^{\omega} \subseteq \zet^{\omega}$ and let $\mu$ be the natural `half-half' measure on $\zet^{\omega}$ supported by $K$. For an arbitrary $z \in \zet^{\omega}$ we want to show that $\mu(A + z) = 0$. To this end, it clearly suffices to show that for every $k \geq 1$ we have $\mu(A_k + z) \le \frac{1}{2^{k-1}}$. So fix $k \ge 1$. For every $n \in \omega$ and every $l(s_n) \leq i < l(s_n) + n + k$, the parity of $x(i)$ does not depend on the choice of $x \in A^n_k + z$. So we clearly have
	\begin{equation*}
		\mu(A^n_k + z) \leq \frac{1}{2^{n+k}},
	\end{equation*}
	and so
	\begin{equation*}
		\mu(A_k + z) \leq \sum\limits_{n \in \omega} \mu(A^n_k + z) \le \sum\limits_{n \in \omega} \frac{1}{2^{n+k}} = \frac{1}{2^{k-1}},
	\end{equation*}
	as we wanted.
	
	(b) Since Haar meager sets form a $\sigma$-ideal, it suffices to show that $\zet^{\omega}\setminus A_k$ is Haar meager for every $k\in\omega$. Let again $K = 2^{\omega} \subseteq \zet^{\omega}$ and fix $k\in\omega$ and $z \in \zet^{\omega}$ arbitrarily. We want to show that $\left( \left( \zet^{\omega} \setminus A_k \right) + z \right) \cap K$ is meager in $K$. The set $\left( \left( \zet^{\omega} \setminus A_k \right) + z \right) \cap K$ is a closed subset of $K$, and so it suffices to show that its complement $(A_k+z)\cap K$ is dense in $K$. So  let us fix $t \in 2^{<\omega}$; we want to find $x \in \left( A_k + z \right) \cap K$ such that $t \subseteq x$. Let $n\in\omega$ be such that $s_n = t - z\rest_{{l(t)}}$. Then we have
	\begin{multline*}
		A_k^n + z = \{ x\in\zet^{\omega} \colon t\subseteq x \text{ and par}(x(i),z(i)) \text{ for every }l(s_n) \leq i < l(s_n) + n + k \}.
	\end{multline*}
	Let us define $x \in \zet^{\omega}$ by
	\begin{equation*}
		x(i) =
		\begin{cases}
			t(i) & \text{if } i < l(t),\\
			0 & \text{if }l(s_n) \leq i < l(s_n) + n + k \text{ and } z(i) \text{ is even},\\
			1 & \text{if }l(s_n) \leq i < l(s_n) + n + k \text{ and } z(i) \text{ is odd},\\
			0 & \text{if } i \geq l(s_n) + n + k.
		\end{cases}
	\end{equation*}
	Then $t\subseteq x$ and $x \in \left( A_k^n + z \right) \cap K \subseteq \left( A_k + z \right) \cap K$, which concludes the proof.
\end{proof}

\begin{theorem}
	\label{clopen basis}
	Let $G$ be an abelian Polish group such that its identity element has a local basis consisting of open subgroups. Then $G$ is the union of a Haar null set and a Haar meager set.
\end{theorem}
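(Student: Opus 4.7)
The proof adapts Example \ref{priklad}: I would construct a compact $K\subseteq G$, a Borel probability measure $\mu$ on $G$, and a decreasing sequence of dense open sets $A_k\subseteq G$ so that $A=\bigcap_k A_k$ is $\mu$-null (hence Haar null) while $G\setminus A$ is Haar meager, witnessed by the inclusion of $K$ into $G$.

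The first step is to fix a strictly decreasing local basis $(U_n)_{n\ge 0}$ of open subgroups with $U_0=G$, a compatible invariant metric $\rho$ with $\diam(U_n)<2^{-n}$, and, after passing to a subsequence, the property $[U_{n-1}:U_n]\ge 3^{n-1}+1$. Inductively pick $c_n\in U_{n-1}\setminus U_n$ avoiding the at most $3^{n-1}$ cosets $\sum_{i<n}\delta_i c_i+U_n$, $\delta\in\{-1,0,1\}^{n-1}\setminus\{0\}$. Then $\phi_M\colon\{0,1\}^M\to G/U_M$, $(\epsilon_i)\mapsto\sum\epsilon_i c_i+U_M$, is injective for every $M$. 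Define $f(\epsilon)=\sum_n\epsilon_n c_n$ (convergent by the diameter condition), set $K=f(2^\omega)$, and $\mu=f_*\nu$ for $\nu$ the half-half Bernoulli measure on $2^\omega$; the injectivity yields $\mu(x+U_M)\le 2^{-M}$ for all $x\in G$ and $M\ge 1$.

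Next, enumerate all pairs $\{(N_n,r_n)\}_{n\in\omega}$ with $N_n\ge 1$ and $r_n$ a representative of some coset of $U_{N_n}$ in $G$ (a countable set). Mirroring Example \ref{priklad}, set $A^n_k=r_n+V^n_k$, where $V^n_k$ is a subgroup of $U_{N_n}$ of finite index (playing the role of the ``all-even'' coset after the prefix $s_n$); put $A_k=\bigcup_n A^n_k$ and $A=\bigcap_k A_k$. The measure estimate $\mu(A_k+z)\le\sum_n 2^{-(N_n+n+k)}\le 2^{1-k}$ then gives $\mu(A+z)=0$ for every $z\in G$, so $A$ is Haar null.

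The main obstacle is showing $f^{-1}(A_k+z)$ is dense in $2^\omega$ for every $z\in G$. Given $t\in 2^{<\omega}$ and $z\in G$, I would take $n$ with $N_n=l(t)$ and $r_n$ the representative of $f(t)-z+U_{l(t)}$, and seek $\epsilon\supseteq t$ with $\sum_{i>l(t)}\epsilon_i c_i\equiv r_n+z-f(t)\pmod{V^n_k}$. In the $\zet^\omega$ case this is immediate because any parity in $\zet$ is matched by some element of $\{0,1\}$. In general the subgroups $V^n_k$ and the branching set $\{0,1\}$ must be adapted to the arithmetic of the quotients $U_{n-1}/U_n$ --- for instance, for $\qe_p^\omega$ with odd $p$ one would replace $2^\omega$ by $\prod\{0,\dots,p-1\}$ and use subgroups of index $p^{n+k}$ in place of $2^{n+k}$. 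Arranging the definition of $V^n_k$ so that the measure estimate and the solvability condition hold simultaneously is the main technical step of the proof.
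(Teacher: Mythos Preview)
Your setup---the Cantor-type compact $K$, the Bernoulli measure $\mu$, the enumeration of cosets, and the scheme $A=\bigcap_k\bigcup_n A^n_k$---matches the paper's framework and is correct as far as it goes. The gap is precisely where you locate it: you never define $V^n_k$, and the natural candidate (a subgroup such as $U_{N_n+n+k}$) cannot work. Your own index hypothesis $[U_{n-1}:U_n]\ge 3^{n-1}+1$ forces the map $\{0,1\}^{m}\to U_{N}/U_{N+m}$, $(\epsilon_i)\mapsto\sum\epsilon_i c_{N+i}$, to be injective but wildly non-surjective; hence a prescribed coset $r_n+z+V^n_k$ will typically miss $K$ altogether, and the density of $(A_k+z)\cap K$ fails. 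No subgroup $V^n_k$ can simultaneously give the measure bound and guarantee that the relevant coset meets $K$ under \emph{every} translation, because the thinness you built into $K$ (for injectivity) is exactly what kills surjectivity onto cosets.

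The paper resolves this by \emph{not} using subgroups for the $A^n_k$. It does not thin the basis at all; it takes any strictly decreasing chain $G_0\supsetneq G_1\supsetneq\cdots$, picks $x_n\in G_n\setminus G_{n+1}$, and splits into two cases according to whether $2x_n\in G_{n+1}$ (binary) or not (ternary, using $3^\omega$). The key object is $S_n=G_n+T_n$, where $T_n$ is a selector for the cosets of $\langle G_n\cup\{x_{n-1}\}\rangle$: thus $S_n$ contains exactly one of the two cosets $G_n$, $G_n+x_{n-1}$ from each pair (or one-or-two of three in the ternary case), and---this is the crucial point---\emph{every translate} $S_n+g$ still has this ``exactly one from each pair'' property. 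With $A^n_k=B_n\cap\bigcap_{i=\psi(n)+1}^{\psi(n)+n+k}S_i$, one proves that $\varphi^{-1}\bigl(K_s\cap\bigcap_i(S_i+g)\bigr)$ is a single cylinder $I_t$ with $t\in 2^{\psi(n)+n+k}$, which gives the measure bound $2^{-(n+k)}$ and nonemptiness in one stroke. That translation-stable selector, rather than a subgroup, is the idea your outline is missing.
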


\begin{proof}
	Let $\{G_n\colon n\in\omega\}$ be a local basis of the identity element of $G$ consisting of open subgroups such that $G=G_0\supsetneqq G_1\supsetneqq G_2\supsetneqq\ldots$.
	For each $n\in\omega$ we fix some $x_n\in G_n\setminus G_{n+1}$.
	By passing to a subsequence, we may assume that either
	\begin{equation}\label{binary}
	x_n+x_n\in G_{n+1}\text{ for every }n\in\omega
	\end{equation}
	or
	\begin{equation}\label{ternary}
	x_n+x_n\notin G_{n+1}\text{ for every }n\in\omega.
	\end{equation}
	Suppose first that (\ref{binary}) holds.
	For each $s=(s_0,\ldots,s_{l(s)-1})\in 2^{<\omega}$, we denote
	\begin{equation*}
		K_s=G_{l(s)}+\sum\limits_{i=0}^{l(s)-1}s_ix_i
	\end{equation*}
	and then we define
	\begin{equation*}
		K=\bigcap\limits_{l\ge 1}\;\bigcup\limits_{\substack{s\in 2^{<\omega}\\ l(s)=l}}\;K_s.
	\end{equation*}
	For each $s\in 2^{<\omega}$, the set $K_s$ is closed (this follows by the well-known fact that every open subgroup of a Polish group is closed). For every $s\in 2^{<\omega}$ and $i\in\{0,1\}$, we also have
	\begin{equation*}
		K_{s^{\wedge}i}=\left(G_{l(s)+1}+ix_{l(s)}\right)+\sum\limits_{i=0}^{l(s)-1}s_ix_i\subseteq G_{l(s)}+\sum\limits_{i=0}^{l(s)-1}s_ix_i=K_s.
	\end{equation*}
	Finally for any invariant compatible complete metric on $G$ (recall that such a metric exists since $G$ is an abelian Polish group) and for every $\alpha\in 2^{\omega}$, the diameter of $K_{\alpha\rest_l}$ tends to zero as $l\rightarrow\infty$.
	By these considerations it immediately follows that for each $\alpha\in 2^{\omega}$, the set $\bigcap_{l\ge 1}K_{\alpha\rest_l}$ contains precisely one element.
	So there is a unique mapping $\varphi\colon 2^{\omega}\rightarrow K$ such that $\varphi(\alpha)\in\bigcap_{l\ge 1}K_{\alpha\rest_l}$ for each $\alpha\in 2^{\omega}$. Then $\varphi$ is clearly a homeomorphism of the Cantor set $2^{\omega}$ onto $K$. In particular, $K$ is a compact set.
	
	For each $n\ge 1$, fix a selector $T_n$ of the subgroup $\left\langle G_n\cup\{x_{n-1}\}\right\rangle$ (i.e. fix $T_n\subseteq G$ such that it contains precisely one element from each coset of $\left\langle G_n\cup\{x_{n-1}\}\right\rangle$) and define
	\begin{equation*}
		S_n=G_n+T_n.
	\end{equation*}
	Note that $G$ is covered by (pairwise disjoint) cosets of $\left\langle G_n\cup\{x_{n-1}\}\right\rangle$. Note also that (1) easily implies that each coset of $\left\langle G_n\cup\{x_{n-1}\}\right\rangle$ is a disjoint union of precisely two cosets of $G_n$.
	So the cosets of $G_n$ are organized into pairs $\left(G_n+g,G_n+x_{n-1}+g\right)$, $g\in G$, and the union of the cosets from each such pair is a coset of $\left\langle G_n\cup\{x_{n-1}\}\right\rangle$. Obviously, precisely one coset of $G_n$ from each of the pairs intersects $T_n$. Since $S_n$ is the union of all cosets of $G_n$ intersected by $T_n$, it follows that $S_n$ contains one coset of $G_n$ from each of the pairs while being disjoint from the other. This is clearly true also for any set of the form $S_n+g$ where $g\in G$. 
	
	\begin{claim}\label{vzory}
		Let $g\in G$, $n\ge 1$, $s=(s_0,\ldots,s_{n-1})\in 2^n$ and $m\ge 1$. Then there is $t\in 2^{n+m}$ such that
		\begin{equation} \varphi^{-1}\left(K_s\cap\left(\bigcap_{i=n+1}^{n+m}S_i+g\right)\right)=I_t.
		\end{equation}
	\end{claim}
	\begin{proof}
		We proceed by induction on $m$.
		Suppose first that $m=1$. The set $S_{n+1}+g-\sum_{i=0}^{n-1}s_ix_i$ contains one of the sets $G_{n+1}$, $G_{n+1}+x_n$ and is disjoint from the other.
		In other words, the set $S_{n+1}+g$ contains one of the sets $K_{s^{\wedge}0}$, $K_{s^{\wedge}1}$ and is disjoint from the other.
		So $\varphi^{-1}(K_s\cap (S_{n+1}+g))$ equals either to $\varphi^{-1}(K_{s^{\wedge}0})=I_{s^{\wedge}0}$ or to $\varphi^{-1}(K_{s^{\wedge}1})=I_{s^{\wedge}1}$.
		
		Now suppose that $m>1$ and $\varphi^{-1}\left(K_s\cap\bigcap_{i=n+1}^{n+m-1}(S_i+g)\right)$ is of the form $I_t$ for some $t\in 2^{n+m-1}$.
		The set $S_{n+m}+g-\sum_{i=0}^{n+m-2}t_ix_i$ contains one of the sets $G_{n+m}$, $G_{n+m}+x_{n+m-1}$ and is disjoint from the other. In other words, the set $S_{n+m}+g$ contains one of the sets $K_{t^{\wedge}0}$, $K_{t^{\wedge}1}$ and is disjoint from the other.
		So the set
		\begin{equation*} \varphi^{-1}\left(K_s\cap\left(\bigcap_{i=n+1}^{n+m}S_i+g\right)\right)=I_t\cap\varphi^{-1}(S_{n+m}+g)
		\end{equation*}
		equals either to $\varphi^{-1}(K_{t^{\wedge}0})=I_{t^{\wedge}0}$ or to $\varphi^{-1}(K_{t^{\wedge}1})=I_{t^{\wedge}1}$.
	\end{proof}
	
	Let $\{B_n\colon n\in\omega\}$ be an enumeration of the open basis $\{G_m+g\colon m\ge 1, g\in G\}$ of $G$ (this basis is countable as it consists of cosets of countably many open subgroups).
	For each $n\in\omega$, let $\psi(n)\ge 1$ be such that $B_n$ is a coset of $G_{\psi(n)}$.
	Note that for each $n\in\omega$, the preimage $\varphi^{-1}(B_n)$ is either empty or of the form $I_s$ for some $s\in 2^{\psi(n)}$.
	We define
	\begin{IEEEeqnarray*}{rCl}
		A_k^n & = & B_n\cap\bigcap\limits_{i=\psi(n)+1}^{\psi(n)+n+k}S_i, \ \ \ n,k\in\omega,\\
		A_k & = & \bigcup\limits_{n\in\omega}A_k^n, \ \ \ k\in\omega, \\
		A & = & \bigcap\limits_{k\in\omega}A_k.
	\end{IEEEeqnarray*}
	We will show that (a) $A$ is Haar null while (b) $G \setminus A$ is Haar meager.
	
	(a) Let $\mu$ be the image of the product measure on $2^{\omega}$ under the homeomorphism $\varphi\colon 2^{\omega}\rightarrow K$. We consider $\mu$ as a measure on $G$ (supported by $K$). Let us fix $g \in G$ arbitrarily, we want to show that $\mu(A + g) = 0$. To this end, it clearly suffices to show that for every $k \geq 1$, we have $\mu(A_k + g) < \frac{1}{2^{k-1}}$. So let us fix $k \ge 1$. For each $n\in\omega$ let $k_n\in\omega$ be such that $B_n+g=B_{k_n}$.
	If $n\in\omega$ is such that $\varphi^{-1}\left(B_{k_n}\right)=\emptyset$, then also $\varphi^{-1}\left(A_k^n+g\right)\subseteq\varphi^{-1}(B_{k_n})=\emptyset$, and so $\mu(A^n_k+g)=0$. 
	Otherwise $\varphi^{-1}\left(B_{k_n}\right)$ is of the form $I_s$ for some $s\in 2^{\psi(k_n)}$, and so
	\begin{equation*} \varphi^{-1}\left(A_k^n+g\right)=I_s\cap\varphi^{-1}\left(\bigcap\limits_{i=\psi(k_n)+1}^{\psi(k_n)+n+k}S_i+
g\right)=\varphi^{-1}\left(K_s\cap\left(\bigcap\limits_{i=\psi(k_n)+1}^{\psi(k_n)+n+k}S_i+g\right)\right).
	\end{equation*}
	By Claim \ref{vzory}, this set is of the form $I_t$ for some $t\in 2^{\psi(k_n)+n+k}$. It immediately follows that 
\[\mu(A^n_k + g) \le \frac{1}{2^{\psi(k_n)+n+k}}<\frac{1}{2^{n+k}},\]
	and therefore
	\begin{equation*}
		\mu(A_k + g) \leq \sum\limits_{n \in \omega} \mu(A^n_k + g) < \sum\limits_{n \in \omega}\frac{1}{2^{n+k}} = \frac{1}{2^{k-1}}
	\end{equation*}
	as we wanted.
	
	(b) Since Haar meager sets form a $\sigma$-ideal, it suffices to show that $G\setminus A_k$ is Haar meager for every $k\in\omega$. Fix $k\in\omega$ and $g \in G$ arbitrarily. We want to show that $\left( \left( G \setminus A_k \right) + g \right) \cap K$ is meager in $K$. The set $\left( \left( G \setminus A_k \right) + g \right) \cap K$ is a closed subset of $K$, and so it suffices to show that its complement $(A_k+g)\cap K$ is dense in $K$. So  let us fix $s \in 2^{<\omega}$ arbitrarily, we want to show that $\left( A_k + g \right) \cap K \cap K_s\neq\emptyset$. Let $n\in\omega$ be such that $B_n=K_s-g$ (so that $\psi(n)=l(s)$); then we have
	\begin{equation*}
		A_k^n+g=K_s\cap\left(\bigcap\limits_{i=\psi(n)+1}^{\psi(n)+n+k}S_i+g\right).
	\end{equation*}
	By Claim \ref{vzory} it follows that there exists $t\in 2^{\psi(n)+n+k}$ such that
	\begin{equation*} \emptyset\neq\varphi(I_t)\subseteq(A^n_k+g)\cap K\cap K_s\subseteq(A_k+g)\cap K\cap K_s
	\end{equation*}
	as we wanted.
	
	Now suppose that (\ref{ternary}) holds.
	The proof is very similar in this case, we just have to use the Cantor set $3^{\omega}$ instead of $2^{\omega}$ (see Remark \ref{explanation} for an explanation).
	So for each $s=(s_0,\ldots,s_{l(s)-1})\in 3^{<\omega}$ we denote
	\begin{equation*}
		K_s=G_{l(s)}+\sum\limits_{i=0}^{l(s)-1}s_ix_i
	\end{equation*}
	and then we define
	\begin{equation*}
		K=\bigcap\limits_{l\ge 1}\;\bigcup\limits_{\substack{s\in 3^{<\omega}\\ l(s)=l}}\;K_s.
	\end{equation*}
	Similarly as above, there is a (unique) homeomorphism $\varphi\colon 3^{\omega}\rightarrow K$ of the Cantor set $3^{\omega}$ onto $K$ such that $\varphi(\alpha)\in\bigcap_{l\ge 1}K_{\alpha\rest_l}$ for each $\alpha\in 3^{\omega}$.
	
	For each $n\ge 1$ we do the following. Fix a selector $T_n$ of the subgroup $\left\langle G_n\cup\{x_{n-1}\}\right\rangle$.
	Let $z'_n$ be the smallest positive integer such that $G_n+z'_nx_{n-1}=G_n$, or let $z'_n=\infty$ if such a natural number does not exist. (Note that by (\ref{ternary}) we have $z'_n\ge 3$.)
	Next we define $z''_n=-1$ if $z'_n<\infty$ and $z''_n=-\infty$ if $z'_n=\infty$.
	It easily follows that each coset of $\left\langle G_n\cup\{x_{n-1}\}\right\rangle$ is a disjoint union of at least three (maybe infinitely many) pairwise disjoint cosets of $G_n$.
	If the considered coset of $\left\langle G_n\cup\{x_{n-1}\}\right\rangle$ is of the form $\left\langle G_n\cup\{x_{n-1}\}\right\rangle+g$ where $g\in T_n$, then the corresponding cosets of $G_n$ are $G_n+g+zx_{n-1}$, $z''_n<z<z'_n$.
	We define
	\begin{equation*}
		S_n=G_n+T_n+\bigcup\limits_{\substack{z''_n<z<z'_n\\ z\text{ is even}}}zx_{n-1}
	\end{equation*}
	(so that $S_n$ contains precisely those cosets of $G_n$ which are on the `even positions').
	
	\begin{claim}\label{ternary claim}
		Let $n\ge 1$ and $g\in G$.
		Then either one or two of the sets $G_n$, $G_n+x_{n-1}$ and $G_n+x_{n-1}+x_{n-1}$ are subsets of $S_n+g$, while the rest of these sets are disjoint from $S_n+g$.
	\end{claim}
	
	\begin{proof}
		It is clear that each of the sets $G_n$, $G_n+x_{n-1}$ and $G_n+x_{n-1}+x_{n-1}$ is either a subset of $S_n+g$ or is disjoint from $S_n+g$.
		So it suffices to show that at least one of them is a subset of $S_n+g$, and at least one of them is disjoint from $S_n+g$.
		Let $h$ be the unique element of $T_n\cap\left(\left\langle G_n\cup\{x_{n-1}\}\right\rangle-g\right)$.
		Then $G_n-g\subseteq\left\langle G_n\cup\{x_{n-1}\}\right\rangle+h$, and so
		there is $z''_n<z<z'_n$ such that $G_n-g=G_n+zx_{n-1}+h$. Then we have 
		\begin{IEEEeqnarray*}{lClll}
			G_n & = & G_n+h & +zx_{n-1} & +g,\\
			G_n+x_{n-1} & = & G_n+h & +(z+1)x_{n-1} & +g,\\
			G_n+x_{n-1}+x_{n-1} & = & G_n+h & +(z+2)x_{n-1} & +g.
		\end{IEEEeqnarray*}
		If $z+1<z'_n$ then one of the sets $G_n$ and $G_n+x_{n-1}$ is a subset of $S_n+g$ while the other is disjoint from $S_n+g$.
		Otherwise $z+1=z'_n$. Then $G_n+x_{n-1}=G_n+h+g$ and $G_n+x_{n-1}+x_{n-1}=G_n+h+x_{n-1}+g$, and so one of these two sets is a subset of $S_n+g$ while the other is disjoint from $S_n+g$.
	\end{proof}
	
	\begin{claim}\label{vzory2}
		Let $g\in G$, $n\ge 1$, $s=(s_0,\ldots,s_{n-1})\in 3^n$ and $m\ge 1$. Then
		\begin{equation*} \varphi^{-1}\left(K_s\cap\left(\bigcap_{i=n+1}^{n+m}S_i+g\right)\right)=\bigcup\limits_{t\in\mathcal T}I_t
		\end{equation*}
		where $\emptyset\neq\mathcal T\subseteq 2^{n+m}$ has at most $2^m$ elements.
	\end{claim}
	\begin{proof}
		We proceed by induction on $m$.
		Suppose first that $m=1$. By Claim \ref{ternary claim}, the set $S_{n+1}+g-\sum_{i=0}^{n-1}s_ix_i$ contains either one or two of the sets $G_{n+1}$, $G_{n+1}+x_n$ and $G_{n+1}+x_n+x_n$, and is disjoint from 
		the rest of these sets.
		In other words, the set $S_{n+1}+g$ contains either one or two of the sets $K_{s^{\wedge}0}$, $K_{s^{\wedge}1}$ and $K_{s^{\wedge}2}$, and is disjoint from the rest of these sets.
		So $\varphi^{-1}(K_s\cap (S_{n+1}+g))$ is equal either to one of the sets $I_{s^{\wedge}0}$, $I_{s^{\wedge}1}$ and $I_{s^{\wedge}2}$, or to the union of two of them.
		
		Now suppose that $m>1$ and
		\begin{equation*} \varphi^{-1}\left(K_s\cap\left(\bigcap_{i=n+1}^{n+m-1}S_i+g\right)\right)=\bigcup\limits_{t\in\mathcal T}I_t
		\end{equation*}
		where $\emptyset\neq\mathcal T\subseteq 2^{n+m-1}$ has at most $2^{m-1}$ elements.
		By Claim \ref{ternary claim}, we have for every $t\in\mathcal T$ that the set $S_{n+m}+g-\sum_{i=0}^{n+m-2}t_ix_i$ contains either one or two of the sets $G_{n+m}$, $G_{n+m}+x_{n+m-1}$ and $G_{n+m}+x_{n+m-1}+x_{n+m-1}$, and is disjoint from rest of these sets.
		In other words, the set $S_{n+m}+g$ contains either one or two of the sets $K_{t^{\wedge}0}$, $K_{t^{\wedge}1}$ and $K_{t^{\wedge}2}$, and is disjoint from the rest of these sets.
		So the set
		\begin{equation*} I_t\cap\varphi^{-1}\left(K_s\cap\left(\bigcap_{i=n+1}^{n+m}S_i+g\right)\right)=I_t\cap\varphi^{-1}(S_{n+m}+g)
		\end{equation*}
		is equal either to one of the sets $I_{t^{\wedge}0}$, $I_{t^{\wedge}1}$ and $I_{t^{\wedge}2}$, or to the union of two of them.
		Since this holds for every $t\in\mathcal T$, the conclusion easily follows.
	\end{proof}
	
	Let $\{B_n\colon n\in\omega\}$ be an enumeration of the open basis $\{G_m+g\colon m\ge 1, g\in G\}$ of $G$.
	For each $n\in\omega$, let $\psi(n)\ge 1$ be such that $B_n$ is a coset of $G_{\psi(n)}$. Then for each $n\in\omega$, the preimage $\varphi^{-1}(B_n)$ is either empty or of the form $I_s$ for some $s\in 3^{\psi(n)}$.
	As in the previous part of the proof, we again define
	\begin{IEEEeqnarray*}{rCl}
		A_k^n & = & B_n\cap\bigcap\limits_{i=\psi(n)+1}^{\psi(n)+n+k}S_i, \ \ \ n,k\in\omega,\\
		A_k & = & \bigcup\limits_{n\in\omega}A_k^n, \ \ \ k\in\omega, \\
		A & = & \bigcap\limits_{k\in\omega}A_k,
	\end{IEEEeqnarray*}
	and we will show that (a) $A$ is Haar null while (b) $G \setminus A$ is Haar meager.
	
	(a) Let $\mu$ be the image of the product measure on $3^{\omega}$ under the homeomorphism $\varphi\colon 3^{\omega}\rightarrow K$, considered as a measure on $G$ (supported by $K$). Let us fix $g \in G$ arbitrarily; we want to show that $\mu(A + g) = 0$. It suffices to show that for every $k \geq 1$ we have $\mu(A_k + g) < 3\cdot\left(\frac{2}{3}\right)^k$. So let us fix $k \ge 1$. For each $n\in\omega$ let $k_n\in\omega$ be such that $B_n+g=B_{k_n}$.
	If $n\in\omega$ is such that $\varphi^{-1}\left(B_{k_n}\right)=\emptyset$, then also $\varphi^{-1}\left(A_k^n+g\right)\subseteq\varphi^{-1}(B_{k_n})=\emptyset$, and so $\mu(A^n_k+g)=0$. 
	Otherwise $\varphi^{-1}\left(B_{k_n}\right)$ is of the form $I_s$ for some $s\in 3^{\psi(k_n)}$, and so
	\begin{equation*} \varphi^{-1}\left(A_k^n+g\right)=I_s\cap\varphi^{-1}\left(\bigcap\limits_{i=\psi(k_n)+1}^{\psi(k_n)+n+k}S_i
+g\right)=\varphi^{-1}\left(K_s\cap\left(\bigcap\limits_{i=\psi(k_n)+1}^{\psi(k_n)+n+k}S_i+g\right)\right).
	\end{equation*}
	By Claim \ref{vzory2}, this set is of the form $\bigcup\limits_{t\in\mathcal T}I_t$ where $\emptyset\neq\mathcal T\subseteq 2^{\psi(k_n)+n+k}$ has at most $2^{n+k}$ elements.
	It follows that 
\[\mu(A^n_k + g) \le 2^{n+k}\cdot\frac{1}{3^{\psi(k_n)+n+k}}<\left(\frac{2}{3}\right)^{n+k}\]
	and therefore
	\begin{equation*}
		\mu(A_k + g) \leq \sum\limits_{n \in \omega} \mu(A^n_k + g) < \sum\limits_{n \in \omega}\left(\frac{2}{3}\right)^{n+k} = 3\cdot\left(\frac{2}{3}\right)^k,
	\end{equation*}
	which is the desired conclusion.
	
	(b) Since Haar meager sets form a $\sigma$-ideal, it suffices to show that $G\setminus A_k$ is Haar meager for every $k\in\omega$. Fix any $k\in\omega$ and $g \in G$. We want to show that $\left( \left( G \setminus A_k \right) + g \right) \cap K$ is meager in $K$. The set $\left( \left( G \setminus A_k \right) + g \right) \cap K$ is a closed subset of $K$, and so it suffices to show that its complement $(A_k+g)\cap K$ is dense in $K$. So  let us fix an arbitrary $s \in 3^{<\omega}$; we want to show that
    $\left( A_k + g \right) \cap K \cap K_s\neq\emptyset$. Let $n\in\omega$ be such that $B_n=K_s-g$ (so that $\psi(n)=l(s)$). Then we have
	\begin{equation*}
		A_k^n+g=K_s\cap\left(\bigcap\limits_{i=\psi(n)+1}^{\psi(n)+n+k}S_i+g\right),
	\end{equation*}
	and by Claim \ref{vzory2}, it follows that there is $t\in 3^{l(s)+n+k}$ such that
	\begin{equation*} \emptyset\neq\varphi(I_t)\subseteq(A^n_k+g)\cap K\cap K_s\subseteq(A_k+g)\cap K\cap K_s.
	\end{equation*}
	This completes the proof.
\end{proof}

\begin{remark}
	\label{explanation}
	\normalfont{
		The reason why we used the Cantor set $3^{\omega}$ instead of $2^{\omega}$ in the second part of the proof of Theorem \ref{clopen basis} is that the `binary analogy' of Claim \ref{ternary claim} does not hold (and thus Claim \ref{vzory2} is a little more complicated that Claim \ref{vzory}).
		Indeed, it could happen e.g. that $g_0+g_0+g_0\in G_1$ and $G_0=\left\langle G_1\cup\{x_0\}\right\rangle$, so that $G=G_0$ is the union of three (pairwise disjoint) cosets of $G_1$. (The obstacle is hidden in the degree three, and any finite odd degree would cause similar problems.)
		The cosets $G_1$ and $G_1+x_0+x_0$ intersect $S_1$ (while $G_1+x_0$ is disjoint from $S_1$).
		Then $S_1+x_0$ is disjoint neither from $G_1$ nor from $G_1+x_0$.
		Of course, this phenomenon could occur also at higher levels of the construction.
		
		Of course, a straightforward application of the proof of Theorem \ref{clopen basis} to the group $\zet^{\omega}$ would lead to the branch of the proof where \eqref{ternary} holds, and so we would have to use the Cantor set $3^\omega$. However, as it is shown in Example \ref{priklad}, in this particular case the `binary' Cantor set works as well, and seems to be more natural.
	}
\end{remark}


\section{Group of permutations}


In this last section, we provide a natural example of a set which is meager but not Haar meager, and of a set which is Haar null but not Haar meager. This is motivated by Problem 4 and Problem 5 from \cite{Darji}.

 By \cite{DoughertyMycielski}, a typical permutation with respect to the ideal of Haar null sets contains infinitely many infinite cycles and finitely many finite cycles. The authors moreover proved that their result is the best possible in a certain sense.


\begin{example}
\normalfont
Let $S_\infty$ denote the group of permutations of natural numbers equipped with the group operation given by $pq=p\circ q$ for $p,q\in S_\infty$.

It follows by \cite{DoughertyMycielski} that the set $F$ of all permutations with only finite cycles is Haar null. On the other hand the set $F$ is easily seen to be comeager (as also noted in \cite{DoughertyMycielski}) and hence it is not Haar meager.
Also the set $P=S_\infty\setminus F$ is meager but as we will prove it is not Haar meager. The reason is that it contains a translation of an arbitrary compact subset of $S_\infty$. 

Note that $P$ is the set of all permutations in $S_\infty$ with an infinite cycle.
We claim that $P$ contains a shift of any compact set in $S_\infty$.
Let $K$ be a nonempty compact set in $S_\infty$.
The $n$-th projection $\pi_n(p)=p(n)$ to the $n$-th coordinate is continuous and thus $\pi_n(K)$ is finite.
Also the mapping $\rho_n(p)=p^{-1}(n)$ is continuous and thus $\rho_n(K)$ is finite.
We denote $a_n=\min \pi_n(K)$ and $b_n=\max\pi_n(K)$.
It follows that $a_n$ as well as $b_n$ converge to infinity.
Let $A_n=\min\{a_n, a_{n+1},\dots\}$, $B_n=\max\{b_1,\dots, b_n\}$.
Again $A_n$ and $B_n$ converge to infinity.
(Roughly speaking the compact set $K$ consists of those permutations whose graphs lie between the nondecreasing sequences $A_n$ and $B_n$.)

We are going to construct a permutation $p\in S_\infty$ such that $pK\subseteq P$.
By induction we are going to define sequences of positive integers $x_n, x_n', y_n, y_n'$, such that 
\begin{align*}
x_n&\leq x_n'\\
y_n&\leq y_n'\\
x_n'-x_n&=y_n'-y_n\\
x_{n+1}&\geq x_n'+2\\
y_{n+1}&\geq y_n'+2 
\end{align*}
such that $A_{y_{n}}\geq x_n'+2$.
Let $x_1=1, x_1'=B_1$.
Let $y_1$ be big enough such that $A_{y_1}\geq x_1'+2$.
Let $y_1'=y_1-x_1+x_1'$.

Suppose that  $x_n, x_n', y_n, y_n'$ have been constructed. We define
$x_{n+1}=A_{y_n}$ and $x_{n+1}'=B_{y_n'}$.
We define $y_{n+1}$ such that $A_{y_{n+1}}\geq x_{n+1}'+2$ and $y_{n+1}\geq y_n'+2$. We put
$y_{n+1}'=y_{n+1}-x_{n+1}+x_{n+1}'$.

All the intervals $[x_n, x_n']$ are pairwise disjoint.
All the intervals $[y_n, y_n']$ are pairwise disjoint.
Let us denote $X_n=[x_{n}, x_{n}']$, $Y_n=[y_{n}, y_{n}']$
We define partially the permutation $p$ in such a way that it maps $X_n$ to $Y_n$ bijectively for every $n\ge 1$. By the properties of the sequences $x_n, x_n', y_n, y_n'$, we have infinitely many gaps in the domain (e.g. the points $x_n'+1$) and also in the range of $p$ (e.g. the points $y_n'+1$) so we can enlarge the partial bijection $p$ to a bijection of the whole set $\N$.

Observe that by the construction for every $q\in K$ we get that $q(k)\in X_{n+1}$ for $k\in Y_n$.
Hence we have
$q(1)\in X_1$, $pq(1)\in Y_1$, $qpq(1)\in X_2$, $pqpq(1)\in Y_2$, $\dots$.
Since the sets $Y_n$ are pairwise disjoint it follows that the permutation $pq$ has an infinite cycle.
Hence $pK\subseteq P$.

\end{example}

\bibliographystyle{plain}
\bibliography{References}

\end{document}